\documentclass[12pt]{amsart}

\setlength{\textwidth}{450pt}
\setlength{\oddsidemargin}{0pt}
\setlength{\evensidemargin}{0pt}
\parskip=6pt

\usepackage{verbatim, amssymb, enumitem, mathtools,color}

\usepackage[breaklinks=true,colorlinks=true,linkcolor=blue,citecolor=red,urlcolor=blue,psdextra,pdfencoding=auto]{hyperref}

\renewcommand \a{\alpha}
\renewcommand \b{\beta}
\newcommand \K{\delta}

\newcommand \la{\lambda}

\newcommand \br{\mathbb{R}}

\newcommand \rk{\operatorname{rk}}
\newcommand \Ker{\operatorname{Ker}}

\newcommand \codim{\operatorname{codim}}

\newcommand \Span{\operatorname{Span}}

\newcommand \cA{\mathcal{A}}

\newcommand \mU{\mathcal{U}}

\newcommand \Vg{\mathfrak{V}}

\newcommand \og{\mathfrak{o}}
\newcommand \eg{\mathfrak{e}}

\newcommand\ag{\mathfrak a}
\newcommand\kg{\mathfrak k}
\newcommand\g{\mathfrak g}
\newcommand\q{\mathfrak q}
\newcommand\cs{\mathfrak c}
\newcommand\csp{\cs^\perp}
\newcommand\h{\mathfrak h}
\newcommand\z{\mathfrak z}
\newcommand\m{\mathfrak m}
\newcommand \so{\mathfrak{so}}

\newcommand \gl{\mathfrak{gl}}
\newcommand \ug{\mathfrak{u}}

\newcommand \s{\mathfrak{s}}
\newcommand \vg{\mathfrak{v}}
\newcommand \n{\mathfrak{n}}
\newcommand \f{\mathfrak{f}}

\renewcommand\t{\mathfrak t}

\newcommand \ad{\operatorname{ad}}
\newcommand \Ad{\operatorname{Ad}}

\newcommand \Id{\operatorname{Id}}

\newcommand{\hodge}{{\star}}

\newcommand \<{\langle}
\renewcommand \>{\rangle}
\newcommand \ip{\<\cdot,\cdot\>}

\newtheorem{theorem}{Theorem}
\newtheorem*{theorem*}{Theorem}

\newtheorem*{corollary*}{Corollary}
\newtheorem*{conj*}{Conjecture}
\newtheorem{lemma}{Lemma}
\newtheorem{proposition}{Proposition}
\newtheorem*{prop*}{Proposition}
\newtheorem*{GeoL}{Geodesic Lemma \cite{DK}}

\theoremstyle{definition}

\newtheorem*{definition*}{Definition}

\theoremstyle{remark}
\newtheorem{remark}{Remark}

\newtheorem*{notation*}{Notation}
\newtheorem*{algorithm*}{Algorithm}
\newtheorem*{example*}{Example}

\makeatletter
\@namedef{subjclassname@2020}{%
  \textup{2020} Mathematics Subject Classification}
\makeatother

\begin{document}

\title[Pseudo-Riemannian geodesic orbit nilmanifolds of signature $(n-2,2)$]{Pseudo-Riemannian geodesic orbit nilmanifolds of signature $\boldsymbol{(n-2,2)}$}

\author{Zhiqi Chen}
\address{ZC: School of Mathematics and Statistics, 
	Guangdong University of Technology, 
	Guangzhou 510520, P.R. China}
\email{chenzhiqi@nankai.edu.cn}
\thanks{ZC was partially supported by NNSF of China (11931009 and 12131012) and Guangdong Basic and Applied Basic Research Foundation (2023A1515010001).}

\author{Yuri Nikolayevsky}
\address{YN: Department of Mathematical and Physical Sciences, 
	La Trobe University, VIC 3086, Australia} 
\email{Y.Nikolayevsky@latrobe.edu.au}
\thanks{YN was partially supported by ARC Discovery Grant DP210100951.}

\author{Joseph A. Wolf}
\address{JW: Department of Mathematics, University of California, 
	Berkeley, CA 94720-3840, USA}
\email{jawolf@math.berkeley.edu, josephwolf2@berkeley.edu }
\thanks{JW was partially supported by a Simons Foundation grant.}

\author{Shaoxiang Zhang}
\address{SZ: College of Mathematics and Systems Science, 
	Shandong University of Science and Technology, 
	Qingdao 266590, P.R. China}
\email{zhangshaoxiang@mail.nankai.edu.cn}
\thanks{SZ was partially supported by the National Natural Science Foundation 
	of China (No. 12201358), 
	Natural Science Foundation of Shandong Province (No. ZR2021QA051)}

\subjclass[2020]{53C30, 53B30, 17B30}
% 53C30  (1973-now) Homogeneous manifolds
% 53C25  (1973-now) Special Riemannian manifolds
% 17B30  Solvable, nilpotent (super)algebras
% 22E25  Nilpotent and solvable Lie groups
% 53B30  (1973–now) Local differential geometry of Lorentz metrics, indefinite metrics

\keywords{pseudo Riemannian nilmanifold, geodesic orbit manifold}

\begin{abstract} 
The geodesic orbit property is useful and interesting in itself, and it plays 
a key role in Riemannian geometry. It implies homogeneity and has important 
classes of Riemannian manifolds as special cases.  Those classes include weakly 
symmetric Riemannian manifolds and naturally reductive Riemannian manifolds.  
The corresponding results for indefinite metric manifolds are much more 
delicate than in Riemannian signature, but in the last few years important 
corresponding structural results were proved for geodesic orbit Lorentz 
manifolds.  Here we extend Riemannian and Lorentz results to trans-Lorentz 
nilmanifolds. Those are the geodesic orbit pseudo Riemannian manifolds 
$M = G/H$ of signature $(n-2,2)$ such that a nilpotent analytic subgroup of 
$G$ is transitive on $M$.  For that we suppose that there is a reductive 
decomposition $\g = \h \oplus \n \text{ (vector space direct sum) with } 
[\h,\n] \subset \n$ and $\n$ nilpotent. When the metric is nondegenerate 
on $[\n,\n]$ we show that $\n$ is abelian or 2-step nilpotent. That is the 
same result as for geodesic orbit Riemannian and Lorentz nilmanifolds.  When 
the metric is degenerate on $[\n,\n]$ we show that $\n$ is a double extension 
of a geodesic orbit nilmanifold of either Riemannian or Lorentz signature. 
\end{abstract}

\maketitle

\section{Introduction and Statement of Results}
\label{s:intro}

A pseudo-Riemannian manifold $(M,ds^2)$ is called a \emph{geodesic orbit manifold} (or a manifold with homogeneous geodesics, or simply a $GO$ manifold), if every geodesic of $M$ is an orbit of a $1$-parameter subgroup of the full isometry group $I(M) = I(M,ds^2)$. One loses no generality
if one replaces $I(M)$ by its identity component $I^0(M)$.  If $G$ is a transitive Lie subgroup of $I^0(M)$, so $(M,ds^2) = (G/H,ds^2)$ where $H$ is an isotropy subgroup of $G$, and if every geodesic of $M$ is an orbit of a $1$-parameter subgroup of $G$, then we say that $(M,ds^2)$ is a \emph{$G$-geodesic orbit manifold}, or a $G$-$GO$ manifold. Clearly every $G$-$GO$ manifold is a $GO$ manifold, but not vice versa. The class of geodesic orbit manifolds includes (but is not limited to) symmetric spaces, weakly symmetric spaces, normal and generalized normal homogeneous spaces, and naturally reductive spaces. For the current state of knowledge in the theory of Riemannian geodesic orbit manifolds we refer the reader to \cite{BN} and its bibliography.

In this paper, we study the $GO$ condition for pseudo-Riemannian nilmanifolds $(N,ds^2)$, relative to subgroups $G \subset I(N)$ of the form $G = N \rtimes H$, where $H$ is an isotropy subgroup. Most of our results apply to the case where $(N,ds^2)$ is a \emph{trans-Lorentz manifold}, that is, the signature of $ds^2$ is $(n-2,2)$, where $n = \dim N$.

Our results for $G$-$GO$ manifolds $(M,ds^2) = (G/H,ds^2)$ require the coset space $G/H$ to be reductive. In other words, they make use of an $\Ad_G(H)$-invariant decomposition $\g = \m \oplus \h$. Very few structural results are known for indefinite metric $GO$ manifolds that are not reductive, and we always assume that $G/H$ is reductive (see the discussion below).

The $GO$ condition for reductive spaces is well known: 
\begin{GeoL} \label{GeoL}
Let $(M,ds^2)=G/H$ be a reductive pseudo-Riemannian homogeneous space, with the corresponding reductive decomposition $\g = \h \oplus \m$. Then $M$ is a $G$-geodesic orbit space if and only if, for any $T \in \m$, there exist $A = A(T) \in \h$ and $k=k(T) \in \br$ such that if $T' \in \m$ then
\begin{equation}\label{eq:golemma}
  \<[T+A,T']_\m,T\> = k \<T,T'\>,
\end{equation}
where $\ip$ denotes the inner product on $\m$ defined by $ds^2$, and the subscript ${}_\m$ in~\eqref{eq:golemma} means taking the $\m$-component in $\g = \h \oplus \m$.
\end{GeoL}
Note that $k(T)=0$ unless $T$ is a null vector (substitute $T'=T$ in~\eqref{eq:golemma}).

Recall that a pseudo-Riemannian \emph{nilmanifold} is a pseudo-Riemannian manifold admitting a transitive nilpotent Lie group of isometries. In the Riemannian case, the full isometry group of a nilmanifold $(N,ds^2)$, where $N$ is a transitive nilpotent group of isometries, is the semidirect product $I(N) = N \rtimes H$, where $H$ is the group of all isometric automorphisms of $(N,ds^2)$ \cite[Theorem~4.2]{W1}. In other words, $N$ is the nilradical of $I(N)$. In the pseudo-Riemannian cases, $I(N)$ might still contain $N \rtimes H$ and yet be strictly larger. In indefinite signatures of metric a nilmanifold is not necessarily reductive as a coset space of $I(N)$, and even when it is, $N$ does not have to be a normal subgroup of $I(N)$.  Here the $GO$ condition does not rescue us, for there exist $4$-dimensional, Lorentz $GO$ nilmanifolds that are reductive relative to $I(N)$, but for which $N$ is not an ideal in $I(N)$ \cite[Section~3]{dBO}. Moreover, already in dimension $4$ (the lowest dimension for homogeneous pseudo-Riemannian spaces $G/H$ with
$H$ connected that are not reductive), every non-reductive space is a $GO$ manifold when we make a correct choice of parameters~\cite[Theorem~4.1]{CFZ}. These results explain (and motivate) our study of $G$-$GO$ nilmanifolds $G/H = (N \rtimes H)/H$, where $N$ is nilpotent, and $N$ is \emph{the maximal} connected subgroup of isometric automorphisms of $(N,ds^2)$ (although our results remain valid for a smaller subgroup $H$). Given a reductive $G$-$GO$ trans-Lorentz nilmanifolds $(G/H,ds^2)$, where $G = N \rtimes H$, with $N$ nilpotent, and the corresponding reductive decomposition $\g = \h \oplus \n$ at the level of Lie algebras, we denote $\ip$ the inner product on $\n$ induced by $ds^2$, and by $\ip'$, the restriction of $\ip$ to the derived algebra $\n'=[\n,\n]$.

The structure of the paper is as follows.  Section~\ref{s:nondeg} contains
the proof of the first main theorem:

\begin{theorem}\label{th:nondeg}
     Let $(M = G/H, ds^2)$ be a connected trans-Lorentz $G$-geodesic orbit nilmanifold where $G = N \rtimes H$, with $N$ nilpotent. Let $\ip$ denote the inner product on $\n$ induced by $ds^2$. If $\ip|_{\n'}$ is nondegenerate, then $N$ is either abelian or $2$-step nilpotent.
\end{theorem}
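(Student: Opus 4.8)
\emph{Proof strategy.} The plan is to reduce the geodesic orbit condition to an algebraic identity on the metric nilpotent Lie algebra $(\n,\ip)$ by means of the Geodesic Lemma, and then to combine the nilpotency of $\n$ with the fact that $\ip$ has index $2$ to show that $\n'=[\n,\n]$ is contained in the centre of $\n$ --- equivalently, that $\n$ is abelian or $2$-step. Since $\g=\h\oplus\n$ with $[\h,\n]\subseteq\n$ and $[\n,\n]\subseteq\n$, for $A\in\h$ the operator $D=\ad(A)|_\n$ is a skew-symmetric derivation of $(\n,\ip)$ and $[T+A,T']_\n=[T,T']+DT'$ for all $T,T'\in\n$. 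Thus \eqref{eq:golemma} becomes: for every $T\in\n$ there exist a skew-symmetric derivation $D_T\in\ad(\h)|_\n\subseteq\Der(\n)\cap\so(\n)$ and $k_T\in\br$, with $k_T=0$ whenever $\<T,T\>\neq0$, such that
\begin{equation}\label{eq:plan-GO}
  (\ad_T)^\ast T = D_T T + k_T T ,
\end{equation}
where $\ad_T\colon X\mapsto[T,X]$ and $(\ad_T)^\ast$ is its metric adjoint; equivalently, $\<[T,X],T\>=\<X,D_T T\>+k_T\<X,T\>$ for all $X\in\n$. I would record at once: $(\ad_T)^\ast T$ is orthogonal to the centralizer $\z_\n(T)$ (hence to $\z(\n)$ and to $T$); $\Im\ad_T\subseteq\n'$ and $\Im(\ad_T)^\ast\subseteq\z_\n(T)^\perp$; and, being a derivation, $D_T$ preserves $\n'$, the centre $\z(\n)$, and every term of the lower central series $\n=\n_1\supseteq\n_2=\n'\supseteq\n_3\supseteq\cdots$, where $\n_{i+1}=[\n,\n_i]$.

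Next I would use nondegeneracy of $\ip|_{\n'}$ to fix the orthogonal splitting $\n=\csp\oplus\n'$ with $\csp:=(\n')^\perp$; here $\csp\cap\n'=0$ (this intersection is the radical of $\ip|_{\n'}$) and $\ip|_{\csp}$ is again nondegenerate. Assume, for contradiction, that $\n$ is neither abelian nor $2$-step, i.e. $\n_3\neq0$, and let $s\geq3$ be the nilpotency class, so $\n_s\neq0=\n_{s+1}$ and $\n_s\subseteq\z(\n)\cap\n'$. Two families of identities drive the argument. Taking $T=c\in\csp$ in \eqref{eq:plan-GO}: since $\ad_c(\n)\subseteq\n'$ and $c\perp\n'$ we obtain $(\ad_c)^\ast c=0$, hence $D_c c=-k_c c$; in particular, if $c\in\csp$ is non-null then $D_c c=0$, and then $[D_c,\ad_c]=0=[D_c,(\ad_c)^\ast]$ since $D_c$ is a skew derivation. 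Taking $T=c+Z$ with $c\in\csp$ and $Z\in\n_s$: since $Z$ is central the quadratic term vanishes and \eqref{eq:plan-GO} reads $j(Z)c=D_T(c+Z)+k_T(c+Z)$, where $j(Z)\in\so(\n)$ is defined by $\<j(Z)X,Y\>=\<[X,Y],Z\>$, with $\Im j(Z)\subseteq\z(\n)^\perp$ and $\z(\n)\subseteq\Ker j(Z)$. Combining these identities with the derivation rules for $D_T$ and the nilpotency of $\ad_c$ yields strong constraints on how iterated brackets of $\n$ build up $\n_s$.

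The final step is the case analysis forced by the index: $\operatorname{index}(\ip|_{\n'})\in\{0,1,2\}$, with $\operatorname{index}(\ip|_{\csp})$ the complementary value. When $\ip|_{\n'}$ is positive definite, all iterated brackets lie in a Riemannian subspace while the two negative directions reside in $\csp$, inert relative to the terms $\n_i$ with $i\geq2$; the contradiction is then reached much as in the Riemannian (Gordon) case. When $\operatorname{index}(\ip|_{\n'})=1$ one reduces to, and sharpens, the structure of Lorentz geodesic orbit nilmanifolds. The genuinely new case is $\operatorname{index}(\ip|_{\n'})=2$, so that $\csp$ is positive definite and every nonzero $c\in\csp$ is non-null (and hence yields $D_c c=0$ as above): using the identities above together with the fact that a totally isotropic subspace of $\n$ has dimension at most $2$, one shows that the operator $\ad_c$, which maps $\n_{s-1}$ into $\n_s$ and annihilates $\n_s$, must in fact annihilate all of $\n_{s-1}$ for every $c\in\csp$; since $[\n',\n_{s-1}]\subseteq\n_{s+1}=0$ automatically, this gives $\n_s=[\n,\n_{s-1}]=0$, contradicting $s\geq3$.

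The step I expect to be the main obstacle is this last one. In indefinite signature one cannot freely take orthogonal complements of subspaces such as $\z_\n(T)$, $\n_s$, or the generalized eigenspaces of $\ad_c$, as these may be degenerate, and the null vectors $T$ --- for which $k_T\neq0$ is allowed in \eqref{eq:plan-GO} --- need separate treatment. What keeps the bookkeeping finite is the bound $\operatorname{index}\leq2$: it caps the dimension of totally isotropic subspaces, and hence the Jordan structure that the operators $\ad_c$ can exhibit while remaining compatible with the skew derivations $D_c$, as well as the amount of metric degeneracy that can propagate along the lower central series; this is ultimately what produces the contradiction.
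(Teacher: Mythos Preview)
Your case split by $\operatorname{index}(\ip|_{\n'})\in\{0,1,2\}$ matches the paper, and the definite case is indeed immediate. But the two indefinite cases carry essentially all the content, and your outline has a genuine gap in each. The engine you are missing is the identity $\<[T,X],X\>=0$ for \emph{all} $T\in\n$ and $X\in\n'$ (equation~\eqref{eq:TXX1} in the paper): it comes from applying~\eqref{eq:golemma} with $T=X+Y$ and $T'=Y\in\vg=(\n')^\perp$, using that $\n'$ and $\vg$ are $\ad_\g(\h)$-invariant. This says every $\ad_T|_{\n'}$ is a nilpotent element of $\so(\n',\ip')$, hence lies (after conjugation) in the nilpotent part $\ug$ of an Iwasawa decomposition; the paper's proof is organized entirely around the explicit matrix form of $\ug$ for $\so(m-1,1)$ and $\so(m-2,2)$. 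Your identities $D_cc=0$ and $j(Z)c=D_{c+Z}(c+Z)$ are strictly weaker and do not recover this structure.

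Two specific problems. When $\operatorname{index}(\ip|_{\n'})=1$, the complement $\vg$ is \emph{also} Lorentz (total index is $2$), so this is not a reduction to the Lorentz $GO$-nilmanifold theorem of~\cite{NW}, where $\vg$ is definite; here $\vg$ has null vectors, the centralizer $\cs$ of $\n'$ in $\n$ can be degenerate, and the paper's argument (Proposition~\ref{p:dernondeg2}) hinges on a delicate analysis of a null $e\in\cs\cap\cs^\perp\subset\vg$ that has no counterpart in your sketch. When $\operatorname{index}(\ip|_{\n'})=2$, your assertion that ``$\ad_c$ annihilates $\n_{s-1}$'' is the desired conclusion, not an argument: even after one has~\eqref{eq:TXX1} and the abelianness of $\n'$, the paper must classify which maximal abelian subalgebra of $\ug\subset\so(m-2,2)$ can contain $\ad(\n)|_{\n'}$ and then run three separate, sometimes intricate, contradictions (Proposition~\ref{p:dernondeg1}). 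That the bound ``isotropic dimension $\le 2$'' alone cannot close the argument is shown by the example in Section~\ref{ss:ex}: with $\n'$ Lorentz but $\vg$ of index $3$ one obtains a $4$-step $GO$ nilmanifold, so the exact signature on $\vg$, not only on $\n'$, is what makes or breaks the proof, and your outline never isolates where that hypothesis is used.
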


\begin{remark} \label{rem:manytrans}
  There are very many connected trans-Lorentz $G$-geodesic orbit nilmanifolds as in Theorem \ref{th:nondeg}.  They are real forms of the complexifications of Riemannian $GO$ spaces.  See \cite[Proposition 4.3 and Corollary 5.4]{CW}
for the collection and \cite{W2} for the fact that those real forms are $GO$.
\hfill $\diamondsuit$ \end{remark}

Theorem \ref{th:nondeg} extends the results of~\cite[Theorem~2.2]{Gor} (for the Riemannian signature) and of~\cite[Theorem~2]{NW} and \cite[Theorem~7]{CWZ} (for the Lorentz signature) to the trans-Lorentz case. 
Our proof of Theorem \ref{th:nondeg} is split into two parts, given in Subsections~\ref{ss:dl} and~\ref{ss:tld}, depending on the signature of $\ip'$.  
In Subsection~\ref{ss:ex} we give an example which shows that the results of Theorem~\ref{th:nondeg} and \cite[Theorem~2]{NW} are ``almost" tight in the sense of the signature: there is a $G$-$GO$ nilmanifold of signature $(8,4)$, with the Lorentz derived algebra, which is $4$-step nilpotent.

In Section~\ref{s:double} we extend the result of~\cite[Theorem~3]{NW} (for 
the Lorentz signature) to the trans-Lorentz (signature $(n-2,2)$) setting.  

In Theorem~\ref{th:deg}, stated just below,  we prove that if the restriction 
$\ip'$ is \emph{degenerate} then $\n$ can be obtained by the 
\emph{double extension} procedure from a metric Lie algebra of either a 
Riemannian or Lorentz nilmanifold. The double extension construction
(which is explained in Section~\ref{s:double}) is a useful tool in 
pseudo-Riemannian homogeneous geometry, in particular in the theory of 
bi-invariant metrics (see the recent survey \cite{Ova}) and in the context 
of $GO$ nilmanifolds \cite[Section~4]{NW}.  The precise result is

\begin{theorem} \label{th:deg} 
Let $(M = G/H, ds^2)$ be a connected trans-Lorentz $G$-geodesic orbit nilmanifold where $G =N \rtimes H$, with $N$ nilpotent. Let $\ip$ denote the inner product on $\n$ induced by $ds^2$. If $\ip|_{\n'}$ is degenerate, then $(\n, \ip)$ is a either a $2$-dimensional double extension of a metric Lie algebra corresponding to a Lorentz $GO$ nilmanifold, or a $4$-dimensional double extension of a metric Lie algebra corresponding to a Riemannian $GO$ nilmanifold.
\end{theorem}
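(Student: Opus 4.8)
The plan is to exploit the Geodesic Lemma together with the degeneracy of $\ip'$ to force a very rigid structure on $\n$. Write $\cN = \n' \cap (\n')^\perp$ for the null part of the derived algebra; since $\ip'$ is degenerate, $\cN \neq 0$, and since the metric on $\n$ has signature $(n-2,2)$ with at most a $2$-dimensional negative-definite part, the totally isotropic subspace $\cN$ has dimension $1$ or $2$. These two cases will correspond to the two alternatives in the statement. First I would establish the basic identities: for $T \in \n$ choose $A = A(T) \in \h$ and $k = k(T)$ as in \eqref{eq:golemma}, record that $k(T) = 0$ unless $T$ is null, and note that $\ad(A)$ is a skew-symmetric derivation of $(\n,\ip)$ preserving $\n'$ and hence $\cN$. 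Testing \eqref{eq:golemma} against $T' \in \n'$ and against $T' \in \cN$ extracts linear constraints that relate the Lie-algebra structure to the isotropic flag $\cN \subset \n' \subset \cN^\perp \subset \n$.

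The heart of the argument is to show that $\cN$ is a central ideal whose orthogonal complement modulo $\cN$ carries a geodesic orbit metric Lie algebra of Riemannian or Lorentz type, and that $\n$ is recovered from it by the double extension construction of Section~\ref{s:double}. Concretely, I would: (i) use skew-symmetry of $\ad(A(T))$ and the $GO$ identity to show $[\n,\cN] \subset \cN$ and in fact that $\cN$ lies in the center of $\n$ — the bracket $[\n,\cN] \subset \n' \cap \cN^\perp$ must collapse because pairing with $T$ ranging over a complement of $\cN^\perp$ yields $k(T)\<T,T'\>$ terms that can only be absorbed if the bracket vanishes; (ii) pass to $\bar\n = \cN^\perp/\cN$, on which $\ip$ descends to a nondegenerate inner product of signature $(n-4,2)$ when $\dim\cN=2$ and $(n-3,1)$ when $\dim\cN=1$, and verify that the induced bracket makes $\bar\n$ a nilpotent metric Lie algebra; (iii) check that the induced space $\bar G/\bar H$ is again a $G$-$GO$ nilmanifold by pushing the $A(T), k(T)$ data through the quotient, which gives Riemannian or Lorentz $GO$ signature respectively; (iv) reconstruct $\n$ as the double extension $\bar\n \oplus \cN \oplus (\text{dual})$, identifying the skew-derivation and the $2$-cocycle data from the original brackets $[\n,\cN^\perp] \subset \cN^\perp$ and $[\n,\n] \cap \cN$.

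The main obstacle I anticipate is step (iv): verifying that the data extracted from $(\n,\ip)$ genuinely assemble into a double extension in the precise sense used in Section~\ref{s:double}, rather than some more general central-plus-cyclic extension. This requires showing that the derivation of $\bar\n$ implicit in the action of $\n$ on $\cN^\perp/\cN$ is itself skew-symmetric (which should follow from the skew-symmetry of $\ad(A(T))$ combined with nilpotency, forcing the relevant endomorphism to lie in the appropriate $\so$), and that the ``top'' piece dual to $\cN$ is genuinely $1$- or $2$-dimensional and abelian. A secondary subtlety is handling the case $\dim\cN=2$: here $\cN$ is a $2$-dimensional totally isotropic subspace, and one must rule out that it splits off a further independent isotropic direction — this is where the trans-Lorentz signature bound $(n-2,2)$ is used decisively, since a third independent null direction in $\n' \cap (\n')^\perp$ together with the two negative directions it would require is impossible. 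Once the dimension of $\cN$ is pinned down and shown central, the remaining verification that the quotient is $GO$ is a routine transfer of the identity \eqref{eq:golemma} through the quotient map, using that $\h$ preserves the flag.
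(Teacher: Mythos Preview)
Your plan has the right skeleton, but step~(i) as stated is false, and this is not a technicality you can patch later. In the case $\dim\cN=2$ (so $\ip'$ is semidefinite of degeneracy~$2$), the paper explicitly treats the situation $[\cN,\cN^\perp]\neq 0$: from the $GO$ identity one only gets $[\cN^\perp,\cN]\subset\cN$, and by Engel's theorem there is a basis $\{e_1,e_2\}$ for $\cN$ with $[\cN^\perp,e_2]=0$ but $[T,e_1]=\lambda(T)e_2$ for some nonzero $\lambda\in(\cN^\perp)^*$. Thus $\cN$ is not central even in $\cN^\perp$, and while your quotient $\cN^\perp/\cN$ is a Lie algebra, $\n$ is \emph{not} a double extension of it in the sense required (the definition demands a \emph{central} extension by $\eg$). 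The paper's remedy is to abandon $\eg=\cN,\ \m_1=\cN^\perp$ and instead take $\eg=\br e_2$ and $\m_1=e_2^\perp\supsetneq\cN^\perp$, then check $[e_2,\m_1]=0$ via the Geodesic Lemma. This produces a $2$-dimensional double extension of a \emph{Lorentz} algebra even though $\dim\cN=2$, so your dichotomy ``$\dim\cN=1\Rightarrow$ Lorentz quotient, $\dim\cN=2\Rightarrow$ Riemannian quotient'' is too coarse. (Incidentally, your signature in~(ii) is off: for $m_0=\dim\eg=2$ the quotient has signature $(n-4,0)$, not $(n-4,2)$.)

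The second gap is that you seriously underestimate the case $\dim\cN=1$ with $\ip'$ of index~$1$ (Lorentz plus a null direction). Here the operator $(\ad e)|_{\n'}$ can be a nonzero nilpotent element of $\so(m-2,1)$, so the one-line argument you sketch (``$k(T)\<T,T'\>$ terms can only be absorbed if the bracket vanishes'') does not close. The paper devotes all of Proposition~\ref{p:em10} to showing $[e,\m_1]=0$ in this case: it sets up an Iwasawa-type normal form for $\ad_{\n'}T$ on a Lorentz complement $\n_0$ to $\br e$ (Lemma~\ref{l:skewonn'}), then picks a transversal null vector $f\notin\m_1$ and proves the chain of identities $[\h,[f,e]]=0$, $[e,[f,\vg]]=0$, $[\vg,[f,[f,e]]]=0$, $[f,[f,[f,e]]]=0$ (Lemma~\ref{l:fe}), and finally runs an induction on the length of iterated brackets generated by $\br f\oplus\vg$. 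None of this is visible from the flag $\cN\subset\n'\subset\cN^\perp$ alone; the transversal $f$ and the $\ad_\g(\h)$-invariance of subspaces like $\Span(u,e_{m-1})$ are essential ingredients your outline does not supply.
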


\begin{remark} \label{rem:repeatde} 
  The Lorentz $GO$ nilmanifold in Theorem~\ref{th:deg} is geodesic orbit relative to the group $G_0$, which is the semidirect product of the group of parallel translations and pseudo orthogonal automorphisms, as constructed in Lemma~\ref{l:de}, and hence by the results of~\cite{NW} is either at most $2$-step nilpotent, or is by itself obtained from a $2$-dimensional double extension of a metric Lie algebra of a Riemannian nilmanifold (which must be at most 2-step nilpotent by~\cite[Theorem~2.2]{Gor}). As the composition of two repeated $2$-dimensional double extensions is equivalent to a single $4$-dimensional double extension, we deduce that in the assumptions of Theorem~\ref{th:deg}, the Lie algebra of the nilmanifold $M$ is obtained either by a $2$-dimensional double extension of a Lorentz Lie algebra $\m_0$ or by a $4$-dimensional double extension of a Riemannian Lie algebra $\m_0$, where in both cases, $\m_0$ is at most $2$-step nilpotent.

Note that even a $2$-dimensional $GO$ double extension of an abelian definite Lie algebra can be of an arbitrarily high step, as shown in~\cite[Section~5]{NW}.
\hfill $\diamondsuit$ \end{remark}

The authors have no competing interests to declare that are relevant to the content of this article.

\section{Proof of Theorem 1: If  \texorpdfstring{$ds^2|_{[\n,\n]}$}{ds\unichar{"00B2}|[\unichar{"1D52B},\unichar{"1D52B}]} is nondegenerate then \\ \texorpdfstring{$\n$}{\unichar{"1D52B}} is either abelian or $2$-step nilpotent}
\label{s:nondeg}

Given a reductive homogeneous pseudo-Riemannian manifold $(G/H,ds^2)$, where $G = N \rtimes H$, with $N$ nilpotent, we identify $\n = {\rm Lie}(N)$ with the tangent space to $G/H$ at $1N$. Let $\ip$ be the inner product on $\n$ induces by $ds^2$, and denote $\n'=[\n,\n]$.

Assume that the restriction $\ip'$ of the inner product $\ip$ to $\n'$ is nondegenerate. Denote $\vg=(\n')^\perp$; note that $\n$ is the direct orthogonal sum of $\n'$ and $\vg$, and both subspaces $\n'$ and $\vg$ of $\n$ are $\ad_\g(\h)$-invariant.

\begin{remark} \label{rem:adhinv}
Note that if $V_1$ and $V_2$ are $\ad_\g(\h)$-invariant subspaces of $\n$, then each of the following subspaces is also $\ad_\g(\h)$-invariant:
  \begin{equation*}
    V_1^\perp, \quad V_1 + V_2, \quad V_1 \cap V_2, \quad [V_1, V_2], \quad \{X \in \n \, : \, [X, V_1] \subset V_2\}.
  \end{equation*}
  In particular, the centraliser and the normaliser of an $\ad_\g(\h)$-invariant subspace of $\n$ are themselves $\ad_\g(\h)$-invariant.
\hfill $\diamondsuit$ \end{remark}

Let $(G/H,ds^2)$ be $G$-geodesic orbit. 
Following the first steps in the proof of \cite[Theorem~7]{CWZ} and of \cite[Theorem~1]{NW}, we take $T=X+Y$ and $T' = X'+Y'$, where $X,X' \in \n'$, $Y,Y' \in \vg$, and $T$ is non-null in \eqref{eq:golemma}. Then $k(T)=0$ and there exists $A=A(X,Y) \in \h$ such that
  \begin{equation}\label{eq:gonondeg1}
  \<[A,X'],X\> + \<[A,Y'],Y\> + \<[X,X']+[Y,X']+[X,Y']+[Y,Y'],X\>=0.
  \end{equation}
Taking $Y'=Y, X'=0$ we obtain, by continuity,
  \begin{equation}\label{eq:YXX1}
    \<[Y,X],X\>=0, \quad\text{for all } Y \in \vg, \, X \in \n'.
  \end{equation}
As $\vg$ generates $\n$ it follows that
  \begin{equation}\label{eq:TXX1}
    \<[T,X],X\>=0, \quad\text{for all } T \in \n, \, X \in \n'.
  \end{equation}

\begin{remark} \label{rem:n'def} 
  Note that if $\ip'$ is definite, equation~\eqref{eq:TXX1} implies $[\n,\n']=0$, and so $\n$ is at most $2$-step nilpotent, regardless of the signature of $\ip$. In the context of Theorem~\ref{th:nondeg} we can therefore assume that $\ip'$ is indefinite.
\hfill $\diamondsuit$ \end{remark}

Separating the $X'$- and the $Y'$-components in \eqref{eq:gonondeg1} and using \eqref{eq:YXX1} and \eqref{eq:TXX1}, we find that for all $X \in \n'$ and $Y \in \vg$ with $X+Y$ non-null, there exists $A=A(X,Y) \in \h$ such that for all $X' \in \n', \, Y' \in \vg$,
  \begin{gather}\label{eq:AYY1}
    \<[A,Y],Y'\> = \<[Y,Y'],X\>,\\
    [A+Y,X] =0. \label{eq:AYX1}
  \end{gather}

Denote $\s := \so(\n', \ip') \subset \gl(\n')$, the algebra of skew-symmetric endomorphisms relative to the restriction of $\ip$ to $\n'$. By \eqref{eq:TXX1} $\kg:= \ad_\g(\n)|_{\n'}$ is a subalgebra of $\s$ consisting of nilpotent endomorphisms. In fact, the map $\phi: \n \to \kg$ defined by $\phi(T) = \ad(T)|_{\n'}$ for $T \in \n$ is a Lie algebra homomorphism. Using Engel's Theorem, $\kg$ is triangular. Thus it is conjugate by an inner automorphism \cite[Theorem~2.1]{Mos} to a subalgebra of the nilpotent part $\ug$ of an Iwasawa decomposition $\s=\t \oplus \ag \oplus \ug$. In the following we may (and do) assume $\kg \subset \ug$.

In view of Remark~\ref{rem:n'def}, to prove Theorem~\ref{th:nondeg} we need to consider two cases: when the restrictions of $\ip$ to both $\n'$ and $\vg$ are Lorentz, and when restriction of $\ip$ to $\n$ is trans Lorentz and the restriction to $\vg$ is definite.

We consider these two cases separately in the following two subsections. The proof of Theorem~\ref{th:nondeg} will follow from Propositions~\ref{p:dernondeg2} and~\ref{p:dernondeg1} below.

\subsection{Both \texorpdfstring{$\n'$ and $\vg$}{\unichar{"1D52B}' and \unichar{"1D533}} are Lorentz}
\label{ss:dl}

In this subsection we additionally assume, in the assumptions of Theorem~\ref{th:nondeg}, that the restrictions of $\ip$ to both $\n'$ and $\vg$ are both Lorentz. We prove the following.

\begin{proposition}\label{p:dernondeg2}
     Let $(M = G/H, ds^2)$ be a connected pseudo-Riemannian $G$-geodesic orbit nilmanifold where $G = N \rtimes H$ with $N$ nilpotent. If the restrictions of $\ip$ to both $\n'$ and $\vg$ are of Lorentz signature, then $N$ is either abelian or $2$-step nilpotent.
\end{proposition}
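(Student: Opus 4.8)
The plan is to exploit the structure theory of the nilpotent algebra $\kg = \ad_\g(\n)|_{\n'} \subset \ug$ sitting inside the nilpotent part of an Iwasawa decomposition of $\s = \so(\n',\ip')$, together with the $GO$ relations \eqref{eq:AYY1} and \eqref{eq:AYX1}, to force $\n' = [\n,\n]$ to be central in $\n$; equivalently $\kg = 0$. Once $[\n,\n'] = 0$ we immediately get that $\n$ is at most $2$-step nilpotent (or abelian), which is the claim. So the whole argument is a proof that $\kg = 0$ under the Lorentz-on-both-factors hypothesis.

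First I would record what $\s = \so(\n',\ip')$ looks like: since $\ip'$ is Lorentz of signature $(p,1)$ with $p = \dim\n' - 1$, the real rank of $\s$ is $1$, so in the Iwasawa decomposition $\s = \t \oplus \ag \oplus \ug$ the nilpotent part $\ug$ is abelian, isomorphic to $\br^{p-1}$, and is naturally identified with a horospherical nilradical. Concretely, picking a null basis, $\ug$ consists of the maps $X \mapsto \<v,X\> e_- - \<e_-,X\> v$ for $v$ in a fixed definite $(p-1)$-dimensional subspace, where $e_-$ is a fixed null vector; in particular every element of $\ug$ has rank $\le 2$, squares to zero, and the product of any two of them is zero. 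Hence $\kg \subset \ug$ is abelian, consists of rank-$\le 2$ nilpotent endomorphisms with $\xi\eta = 0$ for all $\xi, \eta \in \kg$; and since $\phi \colon \n \to \kg$, $\phi(T) = \ad(T)|_{\n'}$, is a Lie algebra homomorphism onto $\kg$, and $\kg$ is abelian, we get $\phi([\n,\n]) = 0$, i.e. $[\n',\n'] \subset \ker\phi$, that is $[\n',[\n,\n']] = 0$. Also $[\n,\n'] = \phi(\n)\n' = \kg\cdot\n'$ lies in the image subspace, a subspace of the form $\Span\{e_-\} \oplus W$ with $W$ definite — I would extract the precise two lines here. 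The key structural consequence to push is: $[\n,\n'] \subset \n'$ is a totally null, hence $\le 1$-dimensional modulo the definite directions... more carefully, $\kg \cdot \n' \subset \Im$ of the horospherical action, which together with $\xi\eta=0$ gives $\kg \cdot [\n,\n'] = 0$, i.e. $[\n,[\n,\n']] = 0$ as well — so both $\n'$ and $\vg$ act trivially on $[\n,\n']$. Since $\vg$ generates $\n$, this is already $[\n,[\n,\n']] = 0$, meaning $[\n,\n']$ is central.

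The remaining and main step is to upgrade "$[\n,\n']$ central" to "$[\n,\n'] = 0$". This is where \eqref{eq:AYY1}–\eqref{eq:AYX1} must be used. Fix $Y \in \vg$ and take $X \in \n'$ with $X + Y$ non-null; \eqref{eq:AYX1} says $[A,X] = -[Y,X] = \phi(Y)X$, while \eqref{eq:AYY1} says $\ad(A)|_\vg$ is (up to the $\ip$-identification) the transpose of $X \mapsto $ something built from $\ad(Y)|_\vg$ and $X$. I would argue that $A = A(X,Y)$ can, after subtracting an element of $\Ker(\ad|_{\n'}) \cap \h$, be taken to depend linearly (indeed, affinely then by a scaling trick linearly) on $X$ for fixed $Y$, on the dense set of admissible $X$, hence everywhere by continuity; call this linear map $X \mapsto A_Y(X)$ from $\n'$ to $\h$. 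Feeding the linearity back into \eqref{eq:AYY1} gives a bilinear identity $\<[A_Y(X),Y],Y'\> = \<[Y,Y'],X\>$ for all $X \in \n'$, $Y,Y' \in \vg$, and feeding it into \eqref{eq:AYX1} gives $[A_Y(X) + Y, X] = 0$, i.e. $[A_Y(X),X] = \phi(Y)X$. Polarising the latter in $X$ and using skew-symmetry of $\ad(A_Y(X))|_{\n'}$ (it lies in $\s$) produces, after contracting against $X$ and using \eqref{eq:TXX1}, a relation forcing $\phi(Y)$ to satisfy $\<\phi(Y)X, \phi(Y)X\> = 0$ — but $\phi(Y) \in \kg \subset \ug$ acts with image containing a definite part $W$ unless $W = 0$, which would mean $\dim\n' \le 2$; handling that degenerate low-dimensional case by hand (it forces $\kg$ to act on a $2$-dimensional Lorentz space by a single null rank-$\le 2$ nilpotent, and then \eqref{eq:AYY1} with the boundedness/continuity of $A$ kills it) finishes it. In all cases $\phi(Y) = 0$ for all $Y \in \vg$, hence $\kg = \phi(\n) = 0$ since $\vg$ generates $\n$, so $[\n,\n'] = 0$ and $N$ is abelian or $2$-step nilpotent.

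The main obstacle, I expect, is the linearization-in-$X$ of the auxiliary element $A(X,Y) \in \h$ and the bookkeeping modulo $\Ker(\ad|_{\n'}) \cap \h$: a priori \eqref{eq:AYY1}–\eqref{eq:AYX1} only determine $A$ up to the centraliser subalgebra of $\h$, and one must check that the ambiguity does not obstruct deriving the bilinear identities — this is exactly the delicate point in the Lorentz arguments of \cite{NW,CWZ} that we are adapting, and the trans-Lorentz setting here is genuinely easier only because the \emph{other} Witt index is still $1$, so the horospherical algebra $\ug$ is abelian; if instead $\n'$ had signature with Witt index $\ge 2$, $\ug$ would be nonabelian (Heisenberg-type) and the first paragraph's "$\kg$ abelian, $\xi\eta = 0$" collapse would fail — which is precisely why the theorem is stated only in signature $(n-2,2)$ and why the example in Subsection~\ref{ss:ex} lives in signature $(8,4)$.
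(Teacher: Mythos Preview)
Your first paragraph contains a concrete error that collapses the argument. You claim that every element of the Iwasawa nilpotent $\ug \subset \so(\n',\ip')$ ``squares to zero, and the product of any two of them is zero.'' This is false. In your own model $\xi_v(X) = \<v,X\>e_- - \<e_-,X\>v$ (with $e_-$ null, $v \perp e_-$, $v$ spacelike), one computes $\xi_v^2(X) = -\|v\|^2\<e_-,X\>e_-$, which is nonzero on the dual null vector $e_+$; likewise $\xi_u\xi_v \ne 0$ when $\<u,v\>\ne 0$. In the paper's basis~\eqref{eq:ip'ug} this is exactly the fact that $\phi(T)^2 e_1 = -\|\Phi T\|^2 e_m$. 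You have confused the vanishing of the \emph{Lie bracket} on $\ug$ (which is genuinely abelian) with the vanishing of the \emph{associative} product in $\End(\n')$. Consequently your deduction $[\n,[\n,\n']]=0$ fails: in fact $[\n,[\n,\n']] = \br e_m$ whenever $\Phi \ne 0$, and $[\n,\n'] = \Im\Phi + \br e_m$ is not central (since $[T,\Phi T'] = -\<\Phi T,\Phi T'\>e_m$). So the ``remaining main step'' is not an upgrade from $[\n,\n']$ central to $[\n,\n']=0$; you have not reached the central conclusion at all.

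There is also a structural gap: you never use the hypothesis that $\vg$ is Lorentz. The paper's proof uses it essentially. After setting up $\Phi$ and the forms $\omega_i$, it invokes \cite[Theorem~1(b)]{NW} to reduce to the case where the centraliser $\cs = \Ker\Phi$ is \emph{degenerate} in $\n$; the Lorentz signature on $\vg$ then forces $\cs\cap\cs^\perp = \br e$ for a single null $e \in \vg$, and the argument proceeds via Lemma~\ref{l:ci} by picking a dual null $f \in \vg$, analysing $[f,e]$, the ideal $\q = \{T : \omega_1(T,\n)=0\}$, and a carefully chosen $Y_0 \in \q^\perp$, and finally deriving a contradiction from~\eqref{eq:AYX1} applied at $X = [f,Y_0]$. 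None of this machinery is visible in your sketch, and the hand-wavy second half (linearising $A(X,Y)$ in $X$, then extracting ``$\<\phi(Y)X,\phi(Y)X\>=0$'') does not indicate how~\eqref{eq:AYY1}, which concerns the $\vg$-action of $A$, would ever yield a constraint purely on $\phi(Y)|_{\n'}$.
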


\begin{proof}
Denote $m = \dim \n'$ (note that $m \ge 2$). We adopt the notation and will use the facts stated at the start of this section.

The subalgebra $\s=\so(\n',\ip') \subset \gl(\n')$ of skew-symmetric endomorphisms of $\ip'$ is isomorphic to $\so(m -1, 1)$. We can choose a basis $\{e_1, \dots, e_m\}$ for $\n'$ relative to which the restriction of $\ip$ to $\n'$ and the nilpotent part $\ug$ of the Iwasawa decomposition of $\s$ are given by the following matrices:
  \begin{equation} \label{eq:ip'ug}
    \ip|_{\n'} = \left ( \begin{smallmatrix} 0 & 0 & 1 \\ 0 & I_{m-2} & 0 \\  1 & 0 & 0 \end{smallmatrix} \right ) \text{ and }
	\ug = \left\{ \left ( \begin{smallmatrix} 0 & 0 & 0 \\ u & 0_{m-2} & 0 \\
	0 & -u^t & 0 \end{smallmatrix}\right )  \, : \, u \in \br^{m-2} \right\}.
  \end{equation}

As $\kg \subset \ug$, we obtain a linear map $\Phi: \n \to \Span(e_2, \dots, e_{m-1})$ such that, for all $T \in \n$,
 \begin{equation}\label{eq:advg}
    [T, e_1] = \Phi T,\; [T, e_m] = 0, \text{ and }
	[T, e_i] = -\<\Phi T,e_i\> e_m \text{ for } 2 \le i \le m-1.
 \end{equation}
As $\ug$ (and hence $\kg$) is abelian, we obtain $[[\vg,\vg],\n']=0$. Since $\vg$ generates $\n$, we obtain $[[\n,\n],\n']=0$, which implies that $\n'$ is
abelian.

Introduce the $2$-forms $\omega_i \in \Lambda^2(\n)$ by
  \begin{equation}\label{eq:omega}
    [T_1,T_2]= \sum\nolimits_{i=1}^{m} \omega_i(T_1,T_2)e_i	\text{  for  } T_1, T_2 \in \n.
  \end{equation}
From~\eqref{eq:ip'ug} and~\eqref{eq:advg} we have
\begin{equation}\label{eq:omega1}
  \omega_1(T_1,T_2)=\<[T_1,T_2],e_m\>, \quad \text{and} \quad \omega_1(\n,\n')=0.
\end{equation}
As $\omega_1$ cannot be zero (since $e_1 \in \n'$) we obtain $\omega_1(\vg,\vg) \ne 0$.

Using \eqref{eq:advg} and \eqref{eq:omega}, the Jacobi identity gives
  \begin{equation}\label{eq:Jac}
    \sigma\Big(\omega_1(T_1,T_2)\Phi T_3 + \sum\nolimits_{i=2}^{m-1} \omega_i(T_1,T_2)\<\Phi T_3,e_i\>e_m\Big)=0,
  \end{equation}
where $\sigma$ denotes the cyclic permutation of $T_1,T_2,T_3 \in \n$.

Consider the following two subspaces of $\n$:
\begin{equation}\label{eq:ci}
  \cs = \Ker \Phi ( = \{T \in \n \, : \, [T, \n'] = 0\}), \qquad  \q = \{T \in \n \, : \omega_1(T,\n) ( = \<[T, \n],e_m\>) = 0\}
\end{equation}
(the fact that $\Ker \Phi$ is the centraliser of $\n'$ in $\n$ follows from \eqref{eq:advg}). By \cite[Theorem~1(b)]{NW} we can (and will) assume that $\cs$ is degenerate. Furthermore, we can assume that $\cs \ne \n$ (equivalently, $\Phi \ne 0$), as otherwise the algebra $\n$ is $2$-step nilpotent by \eqref{eq:advg}.

In these notations and assumptions, we have the following. 
{
\begin{lemma} \label{l:ci}
{\ }

   \begin{enumerate}[label=\emph{(\alph*)},ref=\alph*]
	  \item \label{it:ciinv}
	   $\n' \subset \q \subset \cs$, and both $\q$ and $\cs$ are $\ad_\g(\h)$-invariant ideals of $\n$. Moreover, $[\cs,\cs] \subset \z(\n) \cap \n'$.

	  \item \label{it:codimci}
	   $\codim \q = 2$ and $\codim \cs \in \{1,2\}$ \emph{(}equivalently, $\rk \, \Phi \in \{1,2\}$\emph{)}.

	  \item \label{it:eincs}
	   Let $e \in \cs \cap \cs^\perp$ be a nonzero \emph{(}necessarily null\emph{)} vector.
       Then $e \in \vg$, the line $\br e$ is $\ad_\g(\h)$-invariant and $[e,e^\perp]= 0$. 

      \item \label{it:fe}
       Let $f \in \vg$ be a null vector such that $f \notin e^\perp$ and $\<f,e\>=1$. Then $[\h,[f,e]]=0$.

      \item \label{it:eig}
	   $[f,e] \in \z(\n) \cap \n'$ and $e \in \q$.
   \end{enumerate}
\end{lemma}
\begin{proof}
  \eqref{it:ciinv} The fact that $\n' \subset \q$ follows from \eqref{eq:omega1}. Furthermore, taking $T_1 \in \q$ in \eqref{eq:Jac} we obtain $\omega_1(T_2,T_3) \Phi(T_1)=0$ which implies $\q \subset \cs$. As both $\q$ and $\cs$ contain $\n'$, they are ideals of $\n$. The fact that $\cs$ is $\ad_\g(\h)$-invariant follows from Remark~\ref{rem:adhinv}. Moreover, by Remark~\ref{rem:adhinv}, $\q$ is $\ad_\g(\h)$-invariant provided $\br e_m$ is. To see the latter, we note that $[\vg, \n']$ is $\ad_\g(\h)$-invariant. From \eqref{eq:advg} (and the fact that $\Phi \ne 0$), we have $e_m \in [\vg, \n'] \subset \Span(e_2, \dots, e_m)$. Then $[\vg, \n'] \cap ([\vg, \n'])^\perp = \br e_m$ is $\ad_\g(\h)$-invariant, by Remark~\ref{rem:adhinv}. 
  Finally, the fact that $[\cs,\cs] \subset \z(\n) \cap \n'$ follows from the Jacobi identity, as $[\cs,\n']=0$.

  \eqref{it:codimci} If $\rk \Phi \ge 3$, then for almost all triples $T_1,T_2,T_3 \in \n$, the vectors $\Phi T_1, \Phi T_2, \Phi T_3 \in \Span(e_2, \dots, e_{m-1})$ are linearly independent, and so $\omega_1=0$ by \eqref{eq:Jac}. This is a contradiction, so $\rk \Phi \le 2$. As $\Phi \ne 0$ we obtain $\codim \cs \, (= \rk \Phi) \in \{1,2\}$. Now as $\q \subset \cs$ by~\eqref{it:ciinv} and as $\q$ is the null space of the skew-symmetric form $\omega_1$, the codimension of $\q$ must be a positive even number. Furthermore, from \eqref{it:ciinv} we have $\omega_1(\cs,\cs)=0$. If $\codim \cs = 1$, this implies $\codim \q =2$. If $\rk \Phi =2$, we take $T_1,T_2 \in \n$ in \eqref{eq:Jac} such that the vectors $\Phi T_1, \Phi T_2 \in \Span(e_2, \dots, e_{m-1})$ are linearly independent and take $T_3 \in \cs$. We obtain $\omega_1(T_1,\cs)=\omega_1(T_2,\cs)=0$. As $\Span(T_1,T_2) \oplus \cs = \n$ we get $\omega_1(\n, \cs)=0$, and so $\cs \subset \q$ by \eqref{eq:ci} which implies $\cs=\q$ by~\eqref{it:ciinv}.

  \eqref{it:eincs} As $\cs$ is degenerate, the space $\cs \cap \cs^\perp$ has dimension $1$ and is spanned by a (nonzero) null vector $e$. As $\n' \subset \cs$ by~\eqref{it:ciinv} we obtain $e \in \vg$. The subspace $\br e$ is $\ad_\g(\h)$-invariant as $\cs$ is (and by Remark~\ref{rem:adhinv}). Then~\eqref{eq:AYY1} with $Y=e, \, Y' \in e^\perp \cap \vg$ implies that $\<[e,e^\perp \cap \vg],X\>=0$ for all $X \in \n'$ such that $e+X$ is non-null. This gives $[e,e^\perp \cap \vg] = 0$. As $e \in \cs$ we have $[e, \n'] = 0$, and so $[e,e^\perp] = 0$.

  \eqref{it:fe} Choose $f \in \vg$ to be a null vector such that $f \notin \cs$ and $\<f,e\>=1$ (this choice is not unique). Let $A \in \h$. By~\eqref{it:eincs}, $\br e$ is $\ad_\g(\h)$-invariant, and so $[A,e]=ae$, for some $a \in \br$, and so $\<[A,f],e\>=-a$. As $e^\perp \oplus \br f = \n$, we have $[A,f] + af \in e^\perp$. Then $[[A,f],e] = -a [f,e]$ since $[e,e^\perp] = 0$ by~\eqref{it:eincs}. As $[f,[A,e]]=a[f,e]$, the claim follows.

  \eqref{it:eig} Choose $f$ as in \eqref{it:fe}. Then $e^\perp \oplus \br f = \n$, and so from \eqref{it:eincs} we have $[e, \n] \subset \br [f,e]$. Then from \eqref{it:fe} and \eqref{eq:AYX1}, with $X=[f,e]$, we obtain $[Y,[f,e]]=0$, for all $Y \in \vg$ such that $Y+[f,e]$ is non-null, and hence for all $Y \in \vg$. As $\n'$ is abelian, we obtain $[f,e] \in \z(\n) \cap \n'$. But from~\eqref{eq:advg}, $\z(\n) \cap \n' \subset \Span(e_2, \dots, e_m)$ (as $\Phi \ne 0$), and so $\<[f,e],e_m\>=0$ by~\eqref{eq:ip'ug}. So $\<[\n,e],e_m\>=0$ and the claim follows by \eqref{eq:ci}.
\end{proof}
}

Now choose $e \in \cs \cap \cs^\perp$ as in Lemma~\ref{l:ci}\eqref{it:eincs} and choose $f \in \vg \setminus e^\perp$ as in Lemma~\ref{l:ci}\eqref{it:fe}. By Lemma~\ref{l:ci}\eqref{it:ciinv} we have $\n' \subset \q \subset \cs \subset e^\perp$, and by Lemma~\ref{l:ci}\eqref{it:codimci}, $\codim \q = 2$ (and then either $\cs = \q$ or $\cs=e^\perp$). As $f$ is not contained in $e^\perp$, and hence in $\cs$, we have $\Phi f \ne 0$ by~\eqref{eq:advg}. Without loss of generality (scaling $f$ and $e$ and specifying the orthonormal basis $\{e_2, \dots, e_{m-1}\}$) we can assume that $\Phi f = e_2$, and so by \eqref{eq:advg},
\begin{equation}\label{eq:adf}
 [f,e_1]=e_2, \quad [f,e_2] = - e_m,\quad [f,e_i]=0 \; \text{for } i > 2.
\end{equation}
Note that with this choice of the basis, $\z(\n) \cap \n' \subset \Span(e_3, \dots, e_m)$. 

Moreover, from Lemma~\ref{l:ci}\eqref{it:eig} (and Lemma~\ref{l:ci}\eqref{it:ciinv}), the $2$-dimensional subspace $\q^\perp$ contains $e$ and lies in $\vg$. Thus we have $\q^\perp = \Span(e, Y_0)$ for some $Y_0 \notin \q, \, Y_0 \perp e$, with $\<Y_0,Y_0\> \ne 0$. Note that $e^\perp = \br Y_0 \oplus \q$, and so $\br f \oplus \br Y_0 \oplus \q = \n$. As $\omega_1(\q, \n) = 0$ and $\omega_1 \ne 0$, we must have $\omega_1(f,Y_0) \ne 0$. Denote $\kappa_i=\omega_i(f,Y_0)$, so that $[f,Y_0] = \sum_{i=1}^m \kappa_i e_i$ (by~\eqref{eq:omega}), with $\kappa_1 \ne 0$.

As $Y_0 \perp \q$, from~\eqref{eq:AYY1} with $Y=Y_0$ and $Y' \in \q \cap \vg$ we obtain $\<[Y_0,Y'],X\>=0$, for all $Y' \in \q \cap \vg$ and all $X \in \n'$ such that $Y_0+X$ is non-null, which implies $[Y_0,\q \cap \vg] = 0$. Moreover, as $\q^\perp$ is $\ad_\g(\h)$-invariant, for any $A \in \h$ we have $[A,Y_0] \in \Span(e, Y_0)$, and so $[A,Y_0]= \mu e$, for some $\mu \in \br$, since $Y_0$ is non-null and $Y_0 \perp e$.

Then from~\eqref{eq:AYX1} with $X=[f,Y_0]$ we obtain
\begin{equation} \label{eq:fY0}
[[A,f],Y_0]+[f,[A,Y_0]]+[Y,[f,Y_0]]=0,
\end{equation}
for all $Y \in \vg$ such that $Y+[f,Y_0]$ is non-null (note that here we have a particular $A=A(X,Y) \in \h$). Let $[A,f]= a f + b Y_0 + Y'$, where $Y' \in \q \cap \vg$. Then $[[A,f],Y_0] = a [f,Y_0]$ as $[Y_0,\q \cap \vg] = 0$. Furthermore, $[f,[A,Y_0]] = \mu [f,e]$. By Lemma~\ref{l:ci}\eqref{it:eig} we have $[f,e] \in \z(\n) \cap \n'$, and so $[f,e] \in \Span(e_3, \dots, e_m)$. Take $Y = f + \la e$ in \eqref{eq:fY0}, where $\la \in \br$ is chosen in such a way that $Y+X=f+\la e + [f,Y_0]$ is non-null. As $e \in \cs$ we have $[e,[f,Y_0]]=0$, and so we obtain $a [f,Y_0] + [f,[f,Y_0]] \in \Span(e_3, \dots, e_m)$. Substituting $[f, Y_0]= \sum_{i=1}^m \kappa_i e_i$ and using~\eqref{eq:adf} we get $a(\kappa_1 e_1 + \kappa_2 e_2) + \kappa_1 e_2 = 0$ which implies $\kappa_1 = 0$, a contradiction.
\end{proof}

\subsection{Trans-Lorentz \texorpdfstring{$\n'$, definite $\vg$}{\unichar{"1D52B}', definite \unichar{"1D533}}}
\label{ss:tld}

In this subsection we consider the last remaining case in the proof of Theorem~\ref{th:nondeg}. We prove the following.

\begin{proposition}\label{p:dernondeg1}
     Let $(M = G/H, ds^2)$ be a connected pseudo-Riemannian $G$-geodesic orbit nilmanifold where $G = N \rtimes H$ with $N$ nilpotent. Denote $\n'=[\n,\n]$ and $\vg=(\n')^\perp$. If the restriction of $\ip$ to $\n'$ is trans-Lorentz, and the restriction of $\ip$ to $\vg$ is definite, then $N$ is either abelian or $2$-step nilpotent.
\end{proposition}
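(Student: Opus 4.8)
The plan is to run the same machinery as in the proof of Proposition~\ref{p:dernondeg2}, but now with $\s=\so(\n',\ip')\cong\so(m-2,2)$, whose nilpotent Iwasawa part $\ug$ is no longer abelian. So the first task is to fix a convenient basis $\{e_1,\dots,e_m\}$ of $\n'$ adapted to the signature $(m-2,2)$ and to the real rank $2$ Iwasawa decomposition: take two null pairs $e_1,e_m$ and $e_2,e_{m-1}$ with $\<e_1,e_m\>=\<e_2,e_{m-1}\>=1$ and $\{e_3,\dots,e_{m-2}\}$ orthonormal (definite), relative to which $\ug$ consists of the matrices whose action on $\n'$ is the standard two-step nilpotent model: $e_1\mapsto u_1e_2+\dots$, raising the ``$\br^{m-4}$'' part into $e_{m-1}$ and then into $e_m$, with a Heisenberg-type bracket among the parameters. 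Since $\kg=\ad_\g(\n)|_{\n'}\subset\ug$, I would write $[T,e_1]$, $[T,e_{m-1}]$, etc.\ explicitly in terms of two linear maps $\Phi_1T\in\Span(e_2,\dots,e_{m-2})$ and $\Phi_2T\in\br e_{m-1}$ (or similar), the analogue of the single map $\Phi$ in~\eqref{eq:advg}. Crucially, $[T,e_m]=0$ still holds (the lowest null line of $\n'$ is central-acted-on), so $\omega_m$ again plays a distinguished role and $\omega_1(\n,\n')=\<[\n,\n'],e_m\>=0$ is replaced by the correct identity reading off the $e_m$- and $e_{m-1}$-components.

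Next I would import from Section~\ref{s:nondeg} the relations~\eqref{eq:TXX1}, \eqref{eq:AYY1}, \eqref{eq:AYX1} and the Jacobi identity, and define the analogues of $\cs=\Ker\Phi$ (centraliser of $\n'$ in $\n$) and of $\q$, now using the pair of ``top'' forms $\omega_{m-1},\omega_m$ instead of the single $\omega_1$. The key structural point I expect to extract, exactly as before, is a bound $\rk\Phi_1\le 2$ from the Jacobi identity~\eqref{eq:Jac} (if three generic $\Phi_1 T_i$ were independent, the top form would vanish, contradicting $e_1\in\n'$), together with $[\cs,\cs]\subset\z(\n)\cap\n'$ and the $\ad_\g(\h)$-invariance of $\cs$, $\q$, and of the relevant distinguished null lines/planes in $\n'$ coming from the filtration of $\ug$. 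As in Proposition~\ref{p:dernondeg2}, by~\cite[Theorem~1(b)]{NW} I may assume $\cs$ is degenerate and $\cs\ne\n$ (otherwise $\n$ is $\le2$-step by the bracket formulas), and then pick a null $e\in\cs\cap\cs^\perp$; since $\ip|_\vg$ is \emph{definite}, the radical vector $e$ must lie in $\n'$-directions — that is, one gets information forcing $e$ into $\n'$ or producing a contradiction directly, which is a genuinely different turn from the Lorentz-$\vg$ case and should make the argument shorter.

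The main obstacle will be that $\ug$ is nonabelian, so the clean consequence ``$[[\vg,\vg],\n']=0$ hence $\n'$ abelian'' from Proposition~\ref{p:dernondeg2} is no longer immediate; instead $[[\vg,\vg],\n']$ lands in the (still proper, derived) subalgebra $[\ug,\ug]$ acting on $\n'$, and I will have to iterate: $[\n,[\n,\n']]$ sits in $[\kg,\kg]\cdot\n'$, which is one step deeper in the lower central series of $\ug$, and since $\ug$ is only $2$-step nilpotent this terminates quickly, giving $[\n,[\n,[\n,\n']]]=0$, i.e.\ $\n$ is at most $4$-step with $\n'$ itself $2$-step. From there I would reuse the Lemma~\ref{l:ci}-style analysis: invariance of $e^\perp$ and of $[f,e]$ under $\h$ (choosing a null $f\notin e^\perp$, $\<f,e\>=1$), the relation $[e,e^\perp]=0$, and finally feed $X=[f,Y_0]$ (for $Y_0$ spanning the non-null part of $\q^\perp$) into~\eqref{eq:AYX1} and~\eqref{eq:fY0}. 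The definiteness of $\vg$ should collapse several of the subcases that had to be treated in~\ref{ss:dl}, and the contradiction will again come from the ``$\kappa_1\ne0$ versus $\kappa_1=0$'' mechanism: the surviving top-form value $\omega_{\mathrm{top}}(f,Y_0)$ is forced to be nonzero by $e_1\in\n'$ yet forced to be zero by applying $\ad f$ twice and landing in the centre. The expected conclusion is that the degenerate-$\cs$ case is vacuous under a definite $\vg$, so $\cs$ is nondegenerate and~\cite[Theorem~1(b)]{NW} finishes the proof.
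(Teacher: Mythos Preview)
Your plan has two genuine gaps that would stop the argument.

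First, you do not obtain that $\n'$ is abelian; you only claim ``$\n'$ itself $2$-step'' from the fact that $\ug$ is short. The paper needs, and proves as a separate lemma, that $\phi(\n')=0$ (so $\n'$ is abelian and $\kg=\phi(\n)\subset\ug$ is an \emph{abelian} subalgebra). This is not automatic: one checks directly from the explicit form of $\ug\subset\so(m-2,2)$ that any element of $[\ug,\ug]$ which arises as $\phi(X)$ with $X\in\n'$ must vanish, using the antisymmetry $\phi(X)X'+\phi(X')X=0$ on $\n'$. Without this, your centraliser $\cs$ need not contain $\n'$, so $\cs^\perp$ need not lie in $\vg$, and the ``$\ip|_\vg$ definite forces $e\in\n'$ or gives a contradiction'' step has no content.

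Second, even granting $\n'$ abelian, the Lemma~\ref{l:ci}\,/\,null-$f$ mechanism you want to reuse cannot be run here: in Proposition~\ref{p:dernondeg2} one takes $e\in\vg$ null and then a null $f\in\vg$ with $\<f,e\>=1$, and \eqref{eq:AYY1}--\eqref{eq:AYX1} are applied with $Y\in\vg$. When $\ip|_\vg$ is definite there are no null vectors in $\vg$ at all, so neither the choice of $e$ nor of $f$ is available, and the ``$\kappa_1\ne0$ versus $\kappa_1=0$'' endgame has nothing to act on.

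The paper's proof takes a different route that genuinely uses the definiteness of $\vg$. After showing $\n'$ abelian, it works with $\cs=\Ker\phi\cap\vg$ and its orthogonal complement $\csp$ \emph{inside} $\vg$ (so everything is automatically nondegenerate), proves a structural lemma about $\ad_\g(\h)$-invariant subspaces of $\csp$ (no $1$-dimensional ones; $2$-dimensional ones bracket into the centre; abelian $\pi(\h)$ forces $[\csp,\csp]\subset\z(\n)$), and then classifies the possible abelian subalgebras $\kg\subset\ug$ into three maximal types $\ug_1,\ug_2,\ug_3$, treating each by a Jacobi-identity\,/\,rank-of-$\omega$ argument. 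None of the degenerate-$\cs$/null-pair machinery appears.
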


\begin{proof}
Denote $m = \dim \n'$ (we can assume that $m \ge 4$, as otherwise the claim follows from \cite[Theorem~1(b)]{NW}). We adopt the notation and will use the facts stated at the start of the section.

The subalgebra $\s=\so(\n',\ip') \subset \gl(\n')$ of skew-symmetric endomorphisms of $\ip'$ is isomorphic to $\so(m-2, 2)$. We can choose a basis $\{e_1, \dots, e_m\}$ for $\n'$ relative to which the restriction of $\ip$ to $\n'$ and the nilpotent part $\ug$ of the Iwasawa decomposition of $\s$ are given by the following matrices:
  \begin{equation} \label{eq:ip'ug1}
    \ip|_{\n'} = \left ( \begin{smallmatrix} 0_2 & 0 & I_2 \\ 0 & I_{m-4} & 0 \\  I_2 & 0 & 0_2 \end{smallmatrix} \right ) \text{ and }
	\ug = \left\{ \left ( \begin{smallmatrix} 0 & 0 & 0 & 0 & 0 \\ \a & 0 & 0 & 0 & 0 \\ u & v & 0_{m-4} & 0 & 0 \\
	0 & \b & -u^t & 0 & -\a \\ -\b & 0 & -v^t & 0 & 0 \end{smallmatrix}\right ) \, : \, u, v \in \br^{m-4}, \, \a, \b \in \br \right\}.
  \end{equation}

The homomorphism $\phi: \n \to \ug$ (given by $\phi(T) = \ad(T)|_{\n'}$ for $T \in \n$) defines linear maps $U, V:\n \to \br^{m-4}=\Span(e_3, \dots, e_{m-2})$ and vectors $a, b \in \n$ such that for $T \in \n$, the corresponding entries of the matrix $\phi(T)$ in the notation of~\eqref{eq:ip'ug1} are given by $u =UT, \, v =VT, \, \a=\<a,T\>$ and $\b=\<b,T\>$.

{
\begin{lemma} \label{l:tlrn'ab}
  We have $\phi(\n')=0$, and so the subalgebra $\n'$ is abelian.
\end{lemma}
\begin{proof}
  Clearly $\phi(\n') \subset [\ug, \ug]$. From~\eqref{eq:ip'ug1}, the subalgebra $[\ug,\ug]$ is the subspace of elements of $\ug$ (as given in~\eqref{eq:ip'ug1}) with $v=0$ and $\a=0$, so that for $X \in \n'$, we have $VX=0$ and $\<a, X\>=0$. For $X' \in \n$ we have $0= [X,X'] + [X',X] = \phi(X)X' + \phi(X')X$ which gives $x_1' UX + x_1 UX' = 0$ and $x_1'\<b, X\> + x_1\<b, X'\> =0$, where $x_1$ and $x_1'$ are the $e_1$-components of the vectors $X$ and $X'$, respectively. This implies $U\n'=0$ and $\<b,\n'\>=0$, so $\phi(\n')= 0$, as required. But $\phi(T)X=[T,X]$ for $T \in \n$ and $X \in \n'$, and the second claim follows.
\end{proof}
}

From Lemma~\ref{l:tlrn'ab} it follows that the subalgebra $\kg=\phi(\n) \subset \ug$ is abelian. It is not hard to see using the root decomposition of $\ug$ relative to the abelian subalgebra $\ag \subset \s$ in the Iwasawa decomposition (or to calculate directly), that the algebra $\ug$ contains three different \emph{maximal} abelian subalgebras given below (in the notation of~\eqref{eq:ip'ug1}): 

\begin{enumerate}[label=(\roman*),ref=\roman*]
    \item \label{it:ug1}
	$\ug_1 = \{Q \in \ug \, : \, v=0\}$.

	\item \label{it:ug2}
	$\ug_2 = \br \left ( \begin{smallmatrix} 0 & 0 & 0 & 0 & 0 & 0 \\ 1 & 0 & 0 & 0 & 0 & 0 \\ u_1 & v_1 & 0 & 0 & 0 & 0 \\0 & 0 & 0 & 0_{m-5} & 0 & 0 \\
	0 & 0 & -u_1 & 0 & 0 & -1 \\ 0 & 0 & -v_1 & 0 & 0 & 0 \end{smallmatrix}\right) \oplus
    \left\{ \left ( \begin{smallmatrix} 0 & 0 & 0 & 0 & 0 & 0 \\ 0 & 0 & 0 & 0 & 0 & 0 \\ 0 & 0 & 0 & 0 & 0 & 0 \\w & 0 & 0 & 0_{m-5} & 0 & 0 \\
	0 & \b & 0 & - w^t & 0 & 0 \\ -\b & 0 & 0 & 0 & 0 & 0 \end{smallmatrix}\right ) \, : \, w \in \br^{m-5}, \, \b \in \br \right \}$, \newline where $v_1 \ne 0$
    (up to specifying a basis in $\br^{m-5}=\Span(e_4, \dots, e_{m-2})$).

	 \item \label{it:ug3}
	 $\ug_3$ is a maximal abelian subalgebra of the Heisenberg algebra \newline $\left\{ \left ( \begin{smallmatrix} 0 & 0 & 0 & 0 & 0 \\ 0 & 0 & 0 & 0 & 0\\ u & v & 0_{m-4} & 0 & 0 \\
	0 & \b & - u^t & 0 & 0 \\ -\b & 0 & -v^t & 0 & 0 \end{smallmatrix}\right ) \, : \, u,v \in \br^{m-4}, \, \b \in \br \right \} \subset \ug$. 
\end{enumerate}

We will consider these three cases separately, but following the same pattern. Let $\cs = \Ker \phi \cap \vg$ be the centraliser of $\n'$ in $\vg$, and $\csp$ be its orthogonal complement in $\vg$. By Remark~\ref{rem:adhinv}, both subspaces $\cs$ and $\cs^\perp$ are $\ad_\g(\h)$-invariant. We will always assume that the subspace $\cs^\perp$ is non-trivial (equivalently, $\phi \ne 0$), for otherwise $\n$ is at most $2$-step nilpotent.

Denote $\pi: \h \to \so(\cs^\perp)$ the restriction of the representation of $\h$ to $\cs^\perp$, so that $\pi(A)Y=[A,Y]$ for $A \in \h$ and $Y \in \csp$.

{
\begin{lemma} \label{l:csperp}
  {\ }

   \begin{enumerate}[label=\emph{(\alph*)},ref=\alph*]
      \item \label{it:LLperp}
        If $L \subset \csp$ is an $\ad_\g(\h)$-invariant subspace, then $[L, L^\perp]=0$.
	
	  \item \label{it:pino1}
        The subspace $\csp$ has no $1$-dimensional $\ad_\g(\h)$-invariant subspaces.

      \item \label{it:pi2dim}
	   If $L \subset \csp$ is a $2$-dimensional $\ad_\g(\h)$-invariant subspace, then $[L,L] \subset \z(\n)$, where $\z(\n)$ is the centre of $\n$.

	  \item \label{it:pihnotab}
	   If the subalgebra $\pi(\h) \subset \so(\csp)$ is abelian, then $[\csp,\csp] \subset \z(\n)$.
   \end{enumerate}
\end{lemma}
\begin{proof}
  Assertion~\eqref{it:LLperp} follows from \eqref{eq:AYY1} if we take $Y \in L$ and $Y' \in L^\perp$.

  For assertion~\eqref{it:pino1}, suppose that for a nonzero $Y \in \csp$, the space $\br \, Y$ is $\ad_\g(\h)$-invariant. Then $[\h, Y]=0$, and so $[Y, \vg]=0$ by \eqref{eq:AYX1}. As $\vg$ generates $\n$, we obtain $[Y, \n]=0$, and in particular, $\phi(Y)=0$ contradicting the fact that $Y \in \csp \setminus \{0\}$.

  For assertion~\eqref{it:pi2dim}, suppose that $L=\Span(Y_1,Y_2) \subset \csp$ is $\ad_\g(\h)$-invariant, with the vectors $Y_1$ and $Y_2$ being orthonormal. Then for any $A \in \h$, we obtain that $AY_1$ is a multiple of $Y_2$, and $AY_2$ is a multiple of $Y_1$. Hence $AX=0$, where $X=[Y_1,Y_2]$, and so $[X,\vg]=0$, by \eqref{eq:AYX1}. As $\vg$ generates $\n$, the subspace $[L,L]=\br X$ lies in the centre of $\n$.

  For assertion~\eqref{it:pihnotab}, suppose that the subalgebra $\pi(\h) \subset \so(\csp)$, is abelian. Then $\csp$ is the direct, orthogonal sum of $\ad_\g(\h)$-invariant subspaces of dimension $1$ or $2$ each. But by assertion~\eqref{it:pino1}, there can be no $1$-dimensional subspaces, and then the claim follows from assertions~\eqref{it:pi2dim} and~\eqref{it:LLperp}.
\end{proof}
}

Introduce the $2$-forms $\omega_i \in \Lambda^2(\vg)$ by
  \begin{equation}\label{eq:omegaa}
    [Y_1,Y_2]= \sum\nolimits_{i=1}^{m} \omega_i(Y_1,Y_2)e_i	\text{  for  } Y_1, Y_2 \in \vg.
  \end{equation}
Then
\begin{equation}\label{eq:three}
[Y_3,[Y_1,Y_2]] = \sum\nolimits_{i=1}^{m} \omega_i(Y_1,Y_2) \phi(Y_3) e_i.
\end{equation}
From Lemma~\ref{l:tlrn'ab} (and the fact that $[\cs,\n']=0$) we have $\n'=[\vg,\vg] + [\vg, \n'] = [\vg,\vg] + \phi(\csp)\n'$.

We now separately consider three cases for $\phi(\n)$ as given above.

\underline{Case~\eqref{it:ug1}}: $\phi(\n) \subset \ug_1$. Then $\phi(\csp)\n' \subset \Span(e_2, \dots,e_m)$. As $e_1 \in \n'= [\vg,\vg] + \phi(\csp)\n'$, we obtain $\omega_1(\vg,\vg) \ne 0$. The Jacobi identity gives $\sigma(\omega_1(Y_1,Y_2)(\<a, Y_3\> e_2 + UY_3 - \<b,Y_3\>)=0$, where $\sigma$ denotes the cyclic permutation of $Y_1,Y_2,Y_3 \in \vg$, which can be written as
\begin{equation} \label{eq:Jac1}
\sigma(\omega_1(Y_1,Y_2)\phi(Y_3))=0.
\end{equation}
Taking $Y_1, Y_2 \in \cs$ and $Y_3 \in \csp$ we obtain $\omega_1(\cs,\cs) = 0$. By Lemma~\ref{l:csperp}\eqref{it:LLperp} we have $[\cs,\csp]=0$, and so by \eqref{eq:omegaa} we obtain that also $\omega_1(\cs,\csp) = 0$. As $\omega_1(\vg,\vg) \ne 0$, we deduce that  $\omega_1(\csp,\csp) \ne 0$. Now if $\rk \phi (=\dim \csp) > 2$, then the elements $\phi(Y_1), \, \phi(Y_2)$ and $\phi(Y_3)$ are linearly independent for almost all triples of vectors $Y_1, Y_2, Y_3 \in \vg$, and so~\eqref{eq:Jac1} implies $\omega_1(\vg,\vg) = 0$, a contradiction. By Lemma~\ref{l:csperp}\eqref{it:pino1}, we have $\dim \csp > 1$, and so the only remaining possibility is $\dim \csp = 2$. But then from Lemma~\ref{l:csperp}\eqref{it:pi2dim} we obtain $[\csp,\csp] \subset \z(\n)$. Taking $Y_1, Y_2, Y_3 \in \csp$ in equation~\eqref{eq:three}, we get $\omega_1(\csp,\csp)=0$, a contradiction.

\underline{Case~\eqref{it:ug2}}: $\phi(\n) \subset \ug_2$. For $Y \in \vg$, we have
\begin{equation}\label{eq:phiY2}
  \phi(Y)=
  \left ( \begin{smallmatrix} 0 & 0 & 0 & 0 & 0 & 0 \\ \<a,Y\> & 0 & 0 & 0 & 0 & 0 \\ \la \<a,Y\> & \mu \<a,Y\>  & 0 & 0 & 0 & 0 \\WY & 0 & 0 & 0_{m-5} & 0 & 0 \\
	0 & \<b,Y\>  & -\la \<a,Y\>  & -(WY)^t & 0 & -\<a,Y\>  \\ -\<b,Y\> & 0 & -\mu \<a,Y\>  & 0 & 0 & 0 \end{smallmatrix}\right),
\end{equation}
where $\la, \mu \in \br$ and $W: \vg \to \br^{m-5}=\Span(e_4, \dots, e_{m-2})$. We can assume that $a \ne 0$ and $\mu \ne 0$, for otherwise $\phi(\n) \subset \ug_1$.

Arguing similarly to the previous case, we see that $\phi(\csp)\n' \subset \Span(e_2, \dots,e_m)$, and so we must have $\omega_1(\vg,\vg) \ne 0$. From the Jacobi identity we obtain
\begin{equation} \label{eq:Jac2}
\sigma(\omega_1(Y_1,Y_2)(\<a,Y_3\> e_2 +WY_3))=0,
\end{equation}
where $\sigma$ denotes the cyclic permutation of $Y_1,Y_2,Y_3 \in \vg$. From~\eqref{eq:Jac2} with $Y_1, Y_2 \in \cs$ and $Y_3 \in \csp$ we obtain $\omega_1(\cs,\cs) = 0$. Furthermore, we have $[\cs,\csp]=0$ by Lemma~\ref{l:csperp}\eqref{it:LLperp}, and so $\omega_1(\cs,\csp) = 0$ by \eqref{eq:omegaa}. It follows that $\omega_1(\cs, \vg)=0$, and so we must have $\omega_1(\csp,\csp) \ne 0$, as $\omega_1(\vg,\vg) \ne 0$.

From the $e_2$-component of~\eqref{eq:Jac2} we obtain $\omega_1 \wedge \a = 0$, where $\a$ is the $1$-form on $\vg$ defined by $\a(Y)=\<a,Y\>$. By generalised Cartan's Lemma \cite[Lemma~1]{Aga} we get $\omega_1 = \gamma \wedge \a$ for some $1$-form $\gamma \in \vg^*$. As $\omega_1 \ne 0$, the $1$-form $\gamma$ is not a multiple of $\alpha$. Moreover, as $\omega_1(\cs, \vg)=0$, both the vector $a$ and the vector $c \in \vg$ dual to $\gamma$ lie in $\csp$. Taking the inner product of~\eqref{eq:Jac2} with $e_s, \, s=4, \dots, m-2$, we find that $W^t e_s \in \Span(a,c)$. Hence in the matrix $\phi(Y)$ given in~\eqref{eq:phiY2}, for all $Y \in (\Span(a,c))^\perp \cap \vg$, we have $WY=\<a,Y\>=0$. We first suppose that $b \in \Span(a,c)$. Then $\csp \subset \Span(a,c)$, and as $\dim \csp > 1$ by Lemma~\ref{l:csperp}\eqref{it:pino1}, we deduce that $\dim \csp = 2$ and hence, that $[\csp,\csp]$ lies in the centre of $\n$, by Lemma~\ref{l:csperp}\eqref{it:pi2dim}. But now if we take $Y_1, Y_2, Y_3 \in \csp$ with $\<a,Y_3\> \ne 0$ in~\eqref{eq:three}, then from the $e_2$-component we get $\omega_1(\csp,\csp)=0$ which is a contradiction.

We therefore suppose that $b \notin \Span(a,c)$, and so $\Span(a,b) \subset \csp \subset \Span(a,b,c)$ from~\eqref{eq:phiY2}. Note that the subspace $[\vg,[\vg,\n']]$ is $\ad_\g(\h)$-invariant by Remark~\ref{rem:adhinv}. As $[Y,\n']=\phi(Y)\n'$ for $Y \in \vg$, equation~\eqref{eq:phiY2} gives that the subspace $[\vg,[\vg,\n']]$ lies in $\Span(e_3, \dots, e_m)$. Moreover, as $\phi(Y)^2e_3=\mu\<a,Y\>e_{m-1}$ and $\phi(Y)^2e_2 = -\mu\<a,Y\> (\la\<a,Y\>e_{m-1} + \mu\<a,Y\>e_m)$, and as $a \ne 0$ and $\mu \ne 0$, we obtain that the subspace $[\vg,[\vg,\n']]$ contains $\Span(e_{m-1}, e_m)$. But then $[\vg,[\vg,\n']] \cap ([\vg,[\vg,\n']])^\perp = \Span(e_{m-1}, e_m)$, and so the subspace $\Span(e_{m-1}, e_m)$ is $\ad_\g(\h)$-invariant by Remark~\ref{rem:adhinv}. Then the subspace $\{Y \in \csp \, : \, [Y,\n'] \subset \Span(e_{m-1}, e_m)\}$ is also $\ad_\g(\h)$-invariant by Remark~\ref{rem:adhinv}. But the latter subspace is given by $\{Y \in \csp \, : \, WY=0, \, \<a,Y\>=0\} = \csp \cap (\Span(a,c))^\perp=\br Y_0$, where $Y_0 \ne 0$ is the component of $b$ orthogonal to $\Span(a,c)$. This gives a $1$-dimensional $\ad_\g(\h)$-invariant subspace of $\csp$, in contradiction with Lemma~\ref{l:csperp}\eqref{it:pino1}.

\underline{Case~\eqref{it:ug3}}: $\phi(\n) \subset \ug_3$. This is the most involved case. For $Y \in \vg$, we have
\begin{equation}\label{eq:phiY3}
  \phi(Y)=
  \left ( \begin{smallmatrix} 0 & 0 & 0 & 0 & 0 \\ 0 & 0 & 0 & 0 & 0 \\ UY & VY & & 0_{m-4} & 0 & 0 \\
	0 & \<b,Y\>  & -(UY)^t & 0 & 0  \\ -\<b,Y\> & 0 & -(VY)^t & 0 & 0 \end{smallmatrix}\right),
\end{equation}
where $b \in \vg$, and $U,V: \vg \to \br^{m-4}=\Span(e_3, \dots, e_{m-2})$ are such that for all $Y_1, Y_2 \in \vg$ we have
\begin{equation}\label{eq:UV}
  \<UY_1,VY_2\>=\<UY_2,VY_1\> \quad \text{(equivalently, the matrix $U^tV$ is symmetric)}.
\end{equation}
The following lemma sorts out the ``non-generic" cases.
{
\begin{lemma} \label{l:case3gen}
  Suppose $\phi(\n) \not\subset \ug_1$ and $\phi(\n) \not\subset \ug_2$ $($up to specifying the basic vectors $e_1,e_2,e_{m-1},e_m$, but keeping the form of $\ip|_{\n'}$ given in~\eqref{eq:ip'ug1}$)$. We have the following.
  \begin{enumerate}[label=\emph{(\alph*)},ref=\alph*]
    \item \label{it:UYVYli}
    For almost all $Y \in \vg$, the vectors $UY$ and $VY$ are linearly independent.

    \item \label{it:m-1madh}
    The subspace $\Span(e_{m-1}, e_m)$ is $\ad_\g(\h)$-invariant.

    \item \label{it:cscsp}
    $\cs=\Ker U \cap \Ker V,\; \csp=(\Ker U \cap \Ker V)^\perp$, and so for almost all $Y \in \csp$, the vectors $UY$ and $VY$ are linearly independent.

    \item \label{it:U'eV'eli}
    For almost all $e \in \Span(e_3, \dots, e_{m-2})$, the $1$-forms $\xi_e$ and $\eta_e$ on $\vg$ defined by $\xi_e(Y)=\<UY,e\>$ and $\eta_e(Y)=\<VY,e\>$ are linearly independent.
  \end{enumerate}
\end{lemma}
\begin{proof}
  We cannot have $U=V=0$, as otherwise the subspace $\csp$ is at most $1$-dimensional, in contradiction with Lemma~\ref{l:csperp}\eqref{it:pino1}. Furthermore, if $U$ and $V$ are proportional, we can specify the vectors $e_1,e_2,e_{m-1},e_m$ (without changing the form of $\ip|_{\n'}$ given in~\eqref{eq:ip'ug1}) in such a way that $V=0$ hence obtaining $\phi(\n) \subset \ug_1$. We can therefore assume that $U$ and $V$ are not proportional.

  We use the following well known fact. If $F_1, F_2: \br^p \to \br^q$ are linear maps such that $\rk(F_1x,F_2x) \le 1$, for all $x \in \br^p$, then either $F_1$ and $F_2$ are proportional, or there exist $w \in \br^q$ and $\la_1, \la_2 \in (\br^p)^*$ such that $F_1x=\la_1(x)w$ and $F_2x=\la_2(x)w$, for all $x \in \br^p$.

  For assertion~\eqref{it:UYVYli}, we apply the above fact to $U$ and $V$. As we assume that $U$ and $V$ are not proportional, we obtain that $UY=\la_1(Y)w,\; VY=\la_2(Y)w$ for non-proportional $1$-forms $\la_1, \la_2 \in \vg^*$ and for some $w \ne 0$. But this leads to a contradiction with \eqref{eq:UV}.

  To prove assertion~\eqref{it:m-1madh} we note that from assertion~\eqref{it:UYVYli} and from~\eqref{eq:phiY3} it follows that $\Span(e_{m-1}, e_m) \subset [\vg,\n'] \subset \Span(e_3, \dots, e_m)$, and so $[\vg,\n'] \cap ([\vg,\n'])^\perp=\Span(e_{m-1}, e_m)$. Hence the subspace $\Span(e_{m-1}, e_m)$ is $\ad_\g(\h)$-invariant by Remark~\ref{rem:adhinv}.

  For assertion~\eqref{it:cscsp} we note that from~\eqref{eq:phiY3} we have $\csp = (\Ker U \cap \Ker V)^\perp + \br b$. But if $b \notin (\Ker U \cap \Ker V)^\perp$, then the subspace $\{Y \in \csp \, : \, [Y,\n'] \subset \Span(e_{m-1}, e_m)\}=\{Y \in \csp \, : \, UY=VY=0\}$ is $1$-dimensional and is $\ad_\g(\h)$-invariant by assertion~\eqref{it:m-1madh} and Remark~\ref{rem:adhinv}, in contradiction with Lemma~\ref{l:csperp}\eqref{it:pino1}. It follows that $b \in (\Ker U \cap \Ker V)^\perp$, and so $\csp = (\Ker U \cap \Ker V)^\perp$ and $\cs = \Ker U \cap \Ker V$, as required. Then assertion~\eqref{it:UYVYli} implies that for almost all $Y \in \csp$, we have $\rk(UY|VY) = 2$.

  For assertion~\eqref{it:U'eV'eli}, we apply the above linear-algebraic fact to the conjugates of $U$ and $V$. As we assume $U$ and $V$ to be not proportional, the condition that the $1$-forms $\xi_e$ and $\eta_e$ on $\vg$ are linearly dependent for all $e \in \Span(e_3, \dots, e_{m-2})$ would imply the existence of $w \in \vg \setminus \{0\}$ and $e,e' \in \Span(e_3, \dots, e_{m-2})$ such that $UY=\<w,Y\>e$ and $VY=\<w,Y\>e'$, for all $Y \in \vg$. But then by assertion~\eqref{it:cscsp}, $\csp = \br w$ which contradicts Lemma~\ref{l:csperp}\eqref{it:pino1}.
\end{proof}
}
As Cases~\eqref{it:ug1} and~\eqref{it:ug2} have been already understood, for the rest of the proof we will assume that the conditions of Lemma~\ref{l:case3gen} are satisfied.

As $\phi(\vg)\n' \subset \Span(e_3, \dots, e_m)$, in order to have both $e_1$ and $e_2$ in $\n'$, we need the $2$-forms $\omega_1, \omega_2 \in \Lambda^2(\vg)$ defined by~\eqref{eq:omegaa} to be linearly independent. From the Jacobi identity we obtain
\begin{equation} \label{eq:Jac3}
\sigma(\omega_1(Y_1,Y_2) UY_3 + \omega_2(Y_1,Y_2) VY_3)=0,
\end{equation}
where $\sigma$ denotes the cyclic permutation of $Y_1,Y_2,Y_3 \in \vg$. Taking $Y_1,Y_2 \in \cs$ and $Y_3 \in \csp$ in such a way that $\rk(UY_3|VY_3) = 2$ we obtain $\omega_1(\cs,\cs)=\omega_2(\cs,\cs)=0$. As we also have $\omega_1(\cs,\csp)=\omega_2(\cs,\csp)=0$ by Lemma~\ref{l:csperp}\eqref{it:LLperp} and~\eqref{eq:omegaa}, we obtain $\omega_1(\cs,\vg)=\omega_2(\cs,\vg)=0$, and hence the restrictions of $\omega_1$ and $\omega_2$ to $\csp$ must be linearly independent.

\begin{lemma} \label{l:csple4}
  In the assumptions of Lemma~\ref{l:case3gen} we have the following.
  \begin{enumerate}[label=\emph{(\alph*)},ref=\alph*]
    \item \label{it:dimcsp}
    $\dim \csp \le 4$.

    \item \label{it:normaliser}
    Introduce $K_1, K_2 \in \so(\csp)$ by $\<K_iY,Y'\>=\omega_i(Y,Y')$ for $Y, Y' \in \csp$ and $i=1,2$. Denote $S=\Span(K_1, K_2) \subset \so(\csp)$ \emph{(}note that $\dim S = 2$\emph{)}. Then the subalgebra $\pi(\h) \subset \so(\csp)$ normalises the subspace $S$.
  \end{enumerate}
\end{lemma}
\begin{proof}
  For assertion~\eqref{it:dimcsp}, consider the pencil $\mu_1 \omega_1 + \mu_2 \omega_2 \subset \Lambda^2(\vg)$, where $\mu_1, \mu_2 \in \br$. Suppose at least one element of this pencil has rank greater than or equal to $4$. Specifying the vectors $e_1,e_2,e_{m-1},e_m$ we can assume, without loss of generality, that $\rk \omega_1 \ge 4$.

  Let $\mU \subset \Span(e_3, \dots, e_{m-2})$ be the subset of those vectors $e$ for which the $1$-forms $\xi_e, \eta_e \in \vg^*$ are linearly independent. By Lemma~\ref{l:case3gen}\eqref{it:U'eV'eli}, the subset $\mU$ is open and dense in $\Span(e_3, \dots, e_{m-2})$. Taking the inner product of~\eqref{eq:Jac3} with $e \in \mU$ we obtain $\omega_1 \wedge \xi_e + \omega_2 \wedge \eta_e=0$. By generalised Cartan's Lemma \cite[Lemma~1]{Aga}, there exist $1$-forms $\gamma_{11}, \gamma_{12}= \gamma_{21}, \gamma_{22} \in \vg^*$ such that $\omega_1 = \gamma_{11} \wedge \xi_e + \gamma_{12} \wedge \eta_e$ and $\omega_2 = \gamma_{21} \wedge \xi_e + \gamma_{22} \wedge \eta_e$. In particular, $\rk \omega_1 \le 4$, and so $\rk \omega_1 = 4$ by our assumption. Let $L_1=\{Y \in \vg \, : i_Y (\omega_1)=0\}$. Then $L_1$ has codimension $4$, and $\xi_e(L_1)= \eta_e(L_1)=0$, for all $e \in \mU$, and hence for all $e \in \vg$. It follows that $UL_1=VL_1=0$, and so by Lemma~\ref{l:case3gen}\eqref{it:cscsp}, $\csp = (\Ker U \cap \Ker V)^\perp \subset L_1^\perp$ which implies $\dim \csp \le 4$.

  Now suppose that $\rk(\mu_1 \omega_1 + \mu_2 \omega_2) < 4$, for all $\mu_1, \mu_2 \in \br$. As the rank is always even and as $\omega_1$ and $\omega_2$ are linearly independent, we obtain $\rk(\mu_1 \omega_1 + \mu_2 \omega_2) = 2$, for all $(\mu_1, \mu_2) \in \br^2\setminus \{(0,0)\}$. Then it is easy to see that there exist three linearly independent $1$-forms $\zeta_1, \zeta_2, \zeta_3 \in \vg^*$ such that $\omega_1=\zeta_1 \wedge \zeta_3$ and $\omega_2=\zeta_2 \wedge \zeta_3$. From~\eqref{eq:Jac3} we get $\omega_1 \wedge \xi_e + \omega_2 \wedge \eta_e=0$, for all $e \in \Span(e_3, \dots, e_{m-2})$ which gives $\zeta_1 \wedge \zeta_3 \wedge \xi_e + \zeta_2 \wedge \zeta_3 \wedge \eta_e=0$. It follows that $\zeta_1 \wedge \zeta_2 \wedge \zeta_3 \wedge \xi_e=0$, and so $\xi_e \in \Span(\zeta_1, \zeta_2, \zeta_3)$, and similarly $\eta_e \in \Span(\zeta_1, \zeta_2, \zeta_3)$, for all $e \in  \Span(e_3, \dots, e_{m-2})$. But then the common kernel $L_2$ of the $1$-forms $\zeta_1, \zeta_2, \zeta_3$ has codimension $3$ and lies in the kernel of both $U$ and $V$. It follows that $\csp = (\Ker U \cap \Ker V)^\perp \subset L_2^\perp$ and so $\dim \csp \le 3$.

  For assertion~\eqref{it:normaliser}, we first note that the subspace $\Span(e_3, \dots,e_m)=(\Span(e_{m-1},e_m))^\perp$ is $\ad_\g(\h)$-invariant by Lemma~\ref{l:case3gen}\eqref{it:m-1madh} and Remark~\ref{rem:adhinv}. Now let $A \in \h$ and $Y,Y' \in \csp$. Then by \eqref{eq:omega}, the component of the vector $[AY,Y']+[Y,AY']$ lying in $\Span(e_1,e_2)$ equals $(\omega_1(AY,Y')+\omega_1(Y,AY'))e_1+(\omega_2(AY,Y')+\omega_2(Y,AY'))e_2=\<[K_1,\pi(A)] Y,Y'\> e_1 + \<[K_2,\pi(A)] Y,Y'\>e_2$. As $\Span(e_3, \dots,e_m)$ is $\ad_\g(\h)$-invariant, the component of the vector $A[Y,Y']$ lying in $\Span(e_1,e_2)$ equals  the component of the vector $A(\omega_1(Y,Y')e_1+\omega_2(Y,Y')e_2)$ lying in $\Span(e_1,e_2)$, which is $\omega_1(Y,Y') (A_{11}e_1+A_{12}e_2) + \omega_2(Y,Y') (A_{21}e_1+A_{22}e_2)= \<(A_{11}K_1+A_{21}K_2)Y,Y'\>e_1 + \<(A_{12}K_1+A_{22}K_2)Y,Y'\>e_2$, where $A_{ij}$ denotes the corresponding entry of the matrix of $(\ad(A))|_{\n'}$ relative to the basis $\{e_1,e_2, \dots, e_m\}$. We deduce that $[K_1,\pi(A)], [K_2,\pi(A)] \in \Span(K_1,K_2)$, as required.
\end{proof}

By Lemma~\ref{l:csperp}\eqref{it:pino1} and Lemma~\ref{l:csple4}\eqref{it:dimcsp}, we have $2 \le \dim\csp \le 3$. Moreover, by Lemma~\ref{l:csple4}\eqref{it:normaliser}, the subalgebra $\pi(\h) \subset \so(\csp)$ normalises the $2$-dimensional subspace $S=\Span(K_1,K_2) \!\subset \so(\csp)$. If $\pi(\h)$ is abelian, then by Lemma~\ref{l:csperp}\eqref{it:pihnotab}, we obtain $[\csp, \csp] \subset \z(\n)$. Then taking $Y_1,Y_2,Y_3 \in \csp$ in~\eqref{eq:three} we get $\sum\nolimits_{i=1}^{m} \omega_i(Y_1,Y_2) \phi(Y_3) e_i=0$ which by~\eqref{eq:phiY3} gives $\omega_1(Y_1,Y_2)UY_3 + \omega_2(Y_1,Y_2)VY_3 =0$. As by Lemma~\ref{l:case3gen}\eqref{it:cscsp}, the vectors $UY_3$ and $VY_3$ are linearly independent for almost all $Y_3 \in \csp$, we deduce that $\omega_1(\csp,\csp)=\omega_2(\csp,\csp)=0$, a contradiction. So the subalgebra $\pi(\h) \subset \so(\csp)$ is non-abelian. It is easy to see that the only possible case when the normaliser of a two-dimensional subspace $S$ of a subalgebra $\so(\csp),\; \dim \csp \in \{2,3,4\}$, is non-abelian is the following: $\dim \csp = 4$, so that $\so(\csp) = \so(4) = \so(3) \oplus \so(3)$ (direct sum of ideals), and then $S$ lies in one of the two $\so(3)$-components.

In this last remaining case, denote $\tilde{\omega}_j \in \Lambda^2(\csp), \; j=1,2$, the restriction of the $2$-form $\omega_j \in \Lambda^2(\vg)$ to $\csp$, and for $e \in \Span(e_3, \dots, e_{m-2})$, denote $\tilde{\xi}_e,\tilde{\eta}_e \in (\csp)^*$ the restrictions of the $1$-forms $\xi_e,\eta_e \in \vg^*$, respectively (so that for $Y \in \csp$ we have $\tilde{\xi}_e(Y)=\<UY,e\>$ and $\tilde{\eta}_e(Y)=\<VY,e\>$). Restricting equation~\eqref{eq:Jac3} to $\csp$ (note that $U\cs=V\cs=0$ and $\omega_1(\cs,\vg)=\omega_2(\cs,\vg)=0$ anyway), we obtain $\tilde{\omega}_1 \wedge \tilde{\xi}_e + \tilde{\omega}_2 \wedge \tilde{\eta}_e=0$, for all $e \in \Span(e_3, \dots, e_{m-2})$. Applying the Hodge star operator (and noting that the dual vectors to $\tilde{\xi}_e$ and $\tilde{\eta}_e$ are $U^te, V^te \in \csp$, respectively) gives $i_{U^te}(\hodge\tilde{\omega}_1)+i_{V^te}(\hodge\tilde{\omega}_2)=0$. Let $\hodge K_j \in \so(\csp), \; j=1,2$, be defined by $\<(\hodge K_j) Y,Y'\>=\hodge\tilde{\omega}_j(Y,Y')$ for $Y,Y' \in \csp$. Then from the latter equation we obtain $\<(\hodge K_1)U^te,Y\>+\<(\hodge K_2)V^te,Y\>=0$, for all $Y \in \csp$ and all $e \in \Span(e_3, \dots, e_{m-2})$. This is equivalent to
\begin{equation}\label{eq:stars}
\tilde{U} (\hodge K_1) + \tilde{V} (\hodge K_2) = 0,
\end{equation}
where $\tilde{U}$ and $\tilde{V}$ are the restrictions of $U$ and $V$ to $\csp$, respectively. Note that $K_1$ and $K_2$ are linearly independent and belong to the same $\so(3)$-component of the algebra $\so(\csp)=\so(3) \oplus \so(3)$ (direct sum of ideals). As these components are $\hodge$-invariant, we obtain that $\hodge K_1$ and $\hodge K_2$ are also linearly independent and belong to the same $\so(3)$-component. This implies that $(\hodge K_2)^2=\mu \Id$ for some $\mu < 0$ and that $(\hodge K_1) (\hodge K_2) = \nu \Id + K_3$, where $\nu \in \br$, and $K_3 \ne 0$ belongs to the same $\so(3)$-component of $\so(\csp)$ as $\hodge K_1$ and $\hodge K_2$. In particular, $\det K_3 \ne 0$. We now multiply~\eqref{eq:stars} by $\tilde{U}^t$ on the left and by $\hodge K_2$ on the right. We get $\tilde{U}^t\tilde{U}(\nu \Id + K_3)+\mu \tilde{U}^t\tilde{V}=0$. But $U^tV$ is symmetric by~\eqref{eq:UV}, and so $\tilde{U}^t\tilde{V}$ is symmetric (as $U\cs=V\cs=0$) which implies that the $4 \times 4$ matrix $\tilde{U}^t\tilde{U} K_3$ is also symmetric. Choosing a basis for $\csp$ which diagonalises the semi-definite matrix $\tilde{U}^t\tilde{U}$ we find that $\tilde{U}^t\tilde{U} K_3$ can be symmetric only when $\tilde{U}^t\tilde{U} K_3=0$. But as $\det K_3 \ne 0$, this implies $\tilde{U}=0$, that is, $U\csp=0$, which in combination with $U\cs=0$ gives $U=0$. This is a contradiction with Lemma~\ref{l:case3gen}\eqref{it:UYVYli} which completes the proof of the proposition and of Theorem~\ref{th:nondeg}.
\end{proof}

\subsection{Example}
\label{ss:ex}

The following example shows that Theorem~\ref{th:nondeg}, concerning the 
$2$-step property of $\n$ for the case when the derived algebra $\n'$ is nondegenerate, is ``almost" tight in terms of the signature. We construct a nilpotent, metric Lie algebra $(\n,\ip)$ with the following properties:

\begin{itemize}
  \item $\dim \n = 12,\; \dim \n'=4, \, \dim \vg = 8$.
  \item $\n'$ is Lorentz and $\vg$ is of signature $(5,3)$, so that $\n$ is of signature $(8,4)$.
  \item $\n$ is 4-step nilpotent (and $\n'$ is abelian).
  \item $(\n,\ip)$ is $G$-geodesic orbit (for $G$ as in Theorem~\ref{th:nondeg}). 
\end{itemize}

We define $\n = \vg \oplus \n'$, where $\dim \n'=4,\; \dim \vg = 8$. We have a basis $\{e_1, e_2,e_3,e_4\}$ for $\n'$, and a basis $\{f_1, \dots, f_8\}$ for $\vg$. The inner product $\ip$ is defined in such a way that $\vg \perp \n'$, and

  \begin{equation*} 
	\ip|_{\vg} = \left ( \begin{matrix} 0 & 0 & 0 & 0 & I_2 \\ 0 & 0 & 0 & 1 & 0 \\ 0 & 0 & I_2 & 0 & 0 \\ 0 & 1 & 0 & 0 & 0 \\ I_2 & 0 & 0 & 0 & 0 \end{matrix} \right )\, \text{ and }
    \ip|_{\n'} = \left ( \begin{matrix} 0 & 0 & 1 \\ 0 & I_2 & 0 \\  1 & 0 & 0 \end{matrix} \right ). 
  \end{equation*}

The Lie bracket is defined as follows:
\begin{equation*} 
\begin{gathered}
  [f_1, e_1]=e_2,\quad [f_2, e_1]=e_3,\quad [f_1, e_2]=[f_2, e_3]=-e_4,\\
  [f_1,f_2]=e_1,\quad [f_1,f_6]=e_2,\quad [f_2,f_6]=e_3,\quad  [f_1, f_4]=[f_2, f_5]=e_4.
\end{gathered}
\end{equation*}
It is not hard to see that the algebra $\n'$ so defined is $4$-step nilpotent (in fact, if we disregard the inner product, our algebra $\n'$ is the direct sum of the $6$-dimensional ideal $\Span(f_1,f_2,e_1,e_2,e_3,e_4)$ (the algebra $L_{6,21}(1)$ in \cite{dG}) and the $6$-dimensional abelian ideal $\Span(f_3, f_4+e_2, f_5+e_3, f_6-e_1, f_7, f_8)$).

We now define, for every $T=X+Y$, where $X=\sum_{i=1}^{4} x_i e_i \in \n', \; Y=\sum_{j=1}^8 y_j f_j \in \vg$, the linear operator $\cA$ on $\n$ such that $\cA\n' \subset \n',\, \cA\vg \subset \vg$, and relative to the chosen bases for $\vg$ and $\n'$,
\begin{gather*}
  \cA|_{\vg}=\left ( \begin{smallmatrix}
  0 & 0 & 0 & 0 & 0 & 0 & 0 & 0 \\
  0 & 0 & 0 & 0 & 0 & 0 & 0 & 0\\
  x_2+y_4 & x_3+y_5 & 0 & -y_1 & -y_2 & 0 & 0 & 0 \\
  x_1-y_6 & 0 & 0 & 0 & 0 & y_1 & 0 & 0 \\
  0 & x_1-y_6 & 0 & 0 & 0 & y_2 & 0 & 0 \\
  y_2 & -y_1 & 0 & 0 & 0 & 0 & 0 & 0 \\
  0 & y_3-x_4 & -y_2 & y_6-x_1 & 0 & -x_2-y_4 & 0 & 0 \\
  x_4-y_3 & 0 & y_1 & 0 & y_6-x_1 & -x_3-y_5 & 0 & 0 \\
  \end{smallmatrix} \right )\, \text{ and }
  \cA|_{\n'}=\left ( \begin{matrix} 0 & 0 & 0 & 0 \\ -y_1 & 0 & 0 & 0 \\ -y_2 & 0 & 0 & 0 \\ 0 & y_1 & y_2 & 0 \end{matrix} \right )\, .
\end{gather*}
A direct calculation shows that $\cA$ so defined is a skew-symmetric derivation, and that the $GO$ equation $\<\cA T'+[T,T'],T\> = 0$ (see \eqref{eq:golemma}) is satisfied, for all $T' \in \n$. As $\cA$ depends linearly on $T$, one may expect the algebra $(\n,\ip)$ to be even naturally reductive. \hfill $\diamondsuit$

This example also shows that a pseudo-Riemannian $G$-$GO$ nilmanifold with nondegenerate derived algebra loses the property of being 2-step nilpotent already when $\ip'$ is Lorentz (for the case when $\ip'$ is definite, see Remark~\ref{rem:n'def}). 

\section{Proof of Theorem 2: If \texorpdfstring{$ds^2|_{[\n,\n]}$}{ds\unichar{"00B2}|[\unichar{"1D52B},\unichar{"1D52B}]} is degenerate then \\ \texorpdfstring{$\n$}{\unichar{"1D52B}} is a double extension}
\label{s:double}

In this section we consider the case when the restriction of the inner product $\ip$ to the derived algebra $\n'$ is degenerate. 

We start with the following Lemma.
\begin{lemma} \label{l:de}
  Let $(M = G/H, ds^2)$ be a connected pseudo-Riemannian $G$-geodesic orbit nilmanifold where $G =N \rtimes H$, with $N$ nilpotent. Let $\ip$ denote the inner product on $\n$ induced by $ds^2$. Suppose $\ip|_{\n'}$ is degenerate. Let $\m_1$ and $\eg$ be subspaces of $\n$ with the following properties:
     \begin{enumerate}[label=\emph{(\roman*)},ref=\roman*]
      \item \label{it:deincl}
        $\eg \subset \n' \subset \m_1$ \emph{(}so that, in particular, $\m_1$ is an ideal of $\n$\emph{)};

      \item \label{it:dem1z0adh}
        both $\m_1$ and $\eg$ are $\ad_\g(\h)$-invariant;

      \item \label{it:dez0m1}
      $\<\eg, \m_1\>= 0$ and $[\eg,\m_1]=0$;

      \item \label{it:dedim}
      $\dim \m_1 + \dim \eg = \dim \n$.
   \end{enumerate}
   Define the metric nilpotent Lie algebra $\m_0 = \m_1/\eg$ with the inner product $\ip_0$ induced from $\m_1$ \emph{(}this is well-defined by~\eqref{it:dez0m1}\emph{)}, and the pseudo-Riemannian nilmanifold $(M_0 = G_0/H_0, ds_0^2)$, where $G_0 =N_0 \rtimes H_0$, with $N_0$ the \emph{(}simply connected\emph{)} Lie group whose Lie algebra is $\m_0$, $ds_0^2$ is the left-invariant metric on $M_0$ defined by $\ip_0$, and $H_0$ is the maximal connected group of pseudo-orthogonal automorphisms of $\ip_0$.

   Then $(M_0, ds_0^2)$ is a $G_0$-$GO$ pseudo-Riemannian nilmanifold.
\end{lemma}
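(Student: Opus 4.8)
The plan is to verify the condition of the Geodesic Lemma for $(M_0 = G_0/H_0, ds_0^2)$ directly, by transporting the corresponding condition for $(M, ds^2)$ through the quotient map $\m_1 \to \m_0 = \m_1/\eg$.

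First I would set up the structure. Since $\n' \subset \m_1$, the subspace $\m_1$ is an ideal of $\n$ with $[\m_1,\m_1] \subset \n'$, and since $[\eg,\m_1]=0$ and $\eg \subset \m_1$, the subspace $\eg$ is a central ideal of $\m_1$; hence $\m_0=\m_1/\eg$ is a nilpotent Lie algebra with bracket $[\bar X,\bar Y]_0=\overline{[X,Y]}$. From $\<\eg,\m_1\>=0$ we get $\eg\subset\m_1^\perp$, and the dimension count~\eqref{it:dedim} then forces $\eg=\m_1^\perp$ (recall $\ip$ is nondegenerate on $\n$); thus the radical of $\ip|_{\m_1}$ equals $\m_1\cap\m_1^\perp=\eg$, so $\ip_0$ is well defined \emph{and nondegenerate} on $\m_0$, and $\g_0=\h_0\oplus\m_0$ is a genuine reductive decomposition (with $\m_0$ an ideal of $\g_0$, since $N_0$ is normal in $G_0$). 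In particular the Geodesic Lemma applies to $(M_0,ds_0^2)$.

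Next I would descend the isotropy algebra. Every $A\in\h$ acts on $\n$ as a skew-symmetric derivation, so $A|_{\m_1}$ is a derivation of the subalgebra $\m_1$; by the $\ad_\g(\h)$-invariance of $\m_1$ and $\eg$ in~\eqref{it:dem1z0adh}, $A$ maps $\m_1$ to $\m_1$ and $\eg$ to $\eg$, hence induces a derivation $\bar A$ of $\m_0$ with $\bar A\bar X=\overline{[A,X]}$. Since $A$ is $\ip$-skew-symmetric and $\eg$ lies in the radical of $\ip|_{\m_1}$, the derivation $\bar A$ is $\ip_0$-skew-symmetric, so $\bar A\in\h_0=\Lie(H_0)$, the algebra of skew-symmetric derivations of $(\m_0,\ip_0)$. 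Moreover, in $\g_0=\m_0\rtimes\h_0$ one has $[\bar A,\bar T']=\bar A\bar T'=\overline{[A,T']}$ for every $T'\in\m_1$.

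Finally, the core computation. Fix $\bar T\in\m_0$ and pick a lift $T_1\in\m_1\subset\n$. Because $(M,ds^2)$ is $G$-$GO$, the Geodesic Lemma yields $A=A(T_1)\in\h$ and $k=k(T_1)\in\br$ with $\<[T_1+A,T'],T_1\>=k\<T_1,T'\>$ for all $T'\in\n$, hence in particular for all $T'$ in the subalgebra $\m_1$; for such $T'$ we have $[T_1,T']\in[\m_1,\m_1]\subset\n'\subset\m_1$ and $[A,T']\in\m_1$ (here the $\ad_\g(\h)$-invariance of $\m_1$ is exactly what is used). Since these vectors and $T_1$ all lie in $\m_1$, and $\ip_0$ is the form induced on $\m_1/\eg$, the identity descends: for the lift $T'\in\m_1$ of an arbitrary $\bar T'\in\m_0$,
\[
\<[\bar T+\bar A,\bar T']_{\m_0},\bar T\>_0=\<[T_1,T']+[A,T'],T_1\>=k\<T_1,T'\>=k\<\bar T,\bar T'\>_0,
\]
where $[\bar T+\bar A,\bar T']_{\m_0}$ is in fact the whole bracket of $\bar T+\bar A$ and $\bar T'$ in $\g_0$ because $\m_0$ is an ideal of $\g_0$. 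As $\m_1\to\m_0$ is onto, this holds for all $\bar T'\in\m_0$, so~\eqref{eq:golemma} is satisfied for $(M_0,ds_0^2)$ with this $\bar A$ and $\bar k:=k(T_1)$ (the particular lift $T_1$ is immaterial; consistently $\bar k=0$ unless $\bar T$ is null, since then $T_1$ is non-null). By the Geodesic Lemma, $(M_0,ds_0^2)$ is a $G_0$-$GO$ nilmanifold. I expect there to be no serious obstacle here: the work is bookkeeping, namely keeping track of which hypothesis is used where — the invariance of $\m_1$ keeps $[A,T']$ inside $\m_1$ so the pairing descends, the invariance and isotropy of $\eg$ make $\bar A$ a well-defined skew-symmetric derivation, and restricting the $G$-$GO$ identity for $M$ from $\n$ to $\m_1$ loses nothing since it already holds on all of $\n$.
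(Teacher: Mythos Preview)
Your proof is correct and follows essentially the same approach as the paper's: restrict the $G$-$GO$ identity to $T,T'\in\m_1$, observe that the induced action of each $A\in\h$ on $\m_0=\m_1/\eg$ is a skew-symmetric derivation (hence lies in $\h_0$), and descend the identity through the quotient. The only cosmetic difference is that the paper fixes a linear complement to $\eg$ in $\m_1$ and verifies the derivation property by an explicit calculation, whereas you work directly with the quotient and invoke the general fact that a derivation preserving an ideal descends; both routes use exactly the same hypotheses at the same places.
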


Note that the fact that the inner product $\ip_0$ as constructed in Lemma~\ref{l:de} is nondegenerate follows from assumption~\eqref{it:dedim}. 

If we denote $m_0=\dim \eg$, then the signature of the metric $ds_0^2$ is $(p-m_0,q-m_0)$, where $(p,q)$ is the signature of $ds^2$. In the settings of Lemma~\ref{l:de}, we say that the metric Lie algebra $(\n,\ip)$ is a \emph{$2m_0$-dimensional double extension of the metric Lie algebra $(\m_0,\ip_0)$}. Informally, to get $(\n,\ip)$, we first take the central extension of $(\m_0,\ip_0)$ by $\eg$ and then the extension of the resulting Lie algebra $\m_1$ by $m_0$-dimensional space of derivations. 

\begin{proof}[Proof of Lemma~\ref{l:de}] 
  To check the $G_0$-$GO$ property for $(\m_0,\ip_0)$ we need to choose (an arbitrary, but fixed) linear complement to $\eg$ in $\m_1$, which, with some abuse of notation, we will still denote $\m_0$. Then $\m_1 = \m_0 \oplus \eg$. For $X, Y \in \m_0$ we have $\<X,Y\>_0=\<X,Y\>$, as $\eg \perp \m_1$ by assumption~\eqref{it:dez0m1}.  We define the Lie bracket $[\cdot,\cdot]_0$ on $\m_0$ by $[X,Y]_0=[X,Y]_{\m_0}$, for $X, Y \in \m_0$. It is easy to see that $(\m_0,\ip_0)$ is isomorphic to the quotient algebra $\m_1/\eg$.

  Let $X \in \m_0$. By the Geodesic Lemma, there exist $A(X) \in \h$ and $k(X) \in \br$ such that for all $Y \in \m_0$ we have $\<[X+A(X),Y],X\> = k(X) \<X,Y\>$. By assumption~\eqref{it:dem1z0adh} we have $[A(X),Y] \in \m_1 \, (= \m_0 \oplus \eg)$, and so we can define an endomorphism $D(X)$ of $\m_0$ by the formula $D(X)Y = [A(X),Y]_{\m_0}$. As $\ad_\g (A(X))$ is skew-symmetric and $\eg \perp \m_1$, the endomorphism $D(X)$ is skew-symmetric relative to $\ip_0$. To see that $D(X)$ is a derivation of the Lie algebra $(\m_0, [\cdot,\cdot]_0)$ we write, for $Y_1, Y_2 \in \m_0$,
  \begin{align*}
    0 & = ([A(X),[Y_1,Y_2]]-[[A(X),Y_1],Y_2]-[Y_1,[A(X),Y_2]])_{\m_0}\\
     & = ([A(X),[Y_1,Y_2]])_{\m_0}-([[A(X),Y_1],Y_2])_{\m_0}-([Y_1,[A(X),Y_2]])_{\m_0}\\
     & = ([A(X),[Y_1,Y_2]_0])_{\m_0}-([D(X)Y_1,Y_2])_{\m_0}-([Y_1,D(X)Y_2])_{\m_0}\\
     & = D(X)([Y_1,Y_2]_0)-[D(X)Y_1,Y_2]_0-[Y_1,D(X)Y_2]_0,
  \end{align*}
  where in the third line, we used the fact that $\eg$ is $\ad_\g(\h)$-invariant, by assumption~\eqref{it:dem1z0adh}.

  It follows that there exists $A_0(X) \in \h_0$, where $\h_0$ is the Lie algebra of $H_0$, such that $D(X)Y = [A_0(X),Y]_0$, for all $X,Y \in \m_0$. Then from assumption~\eqref{it:dez0m1} and the fact that $\<[X+A(X),Y],X\> = k(X) \<X,Y\>$ it follows that $\<[X+A_0(X),Y]_0,X\>_0 = k(X) \<X,Y\>_0$, for all $X,Y \in \m_0$, as required by the Geodesic Lemma.
\end{proof}

  First suppose that $\ip|_{\n'}$ has degeneracy $1$ and is semidefinite. Denote $\vg = (\n')^\perp$ and choose a vector $e$ such that $\n' \cap \vg =\br e$. Denote $\m_1=\n' + \vg$. Note that $e^\perp = \m_1$ and that all four subspaces $\br e, \n', \vg$ and $\m_1$ are $\ad_\g(\h)$-invariant, by Remark~\ref{rem:adhinv}. To be able to apply Lemma~\ref{l:de} (with $\eg=\br e$) we only need to check that $[e, \m_1]=0$. Taking $T'=e$ and $T = X+Y \in \m_1$ in~\eqref{eq:golemma}, where $X \in \n'$ and $Y \in \vg$, we obtain $\<[e,X+Y],X\>=0$, and so $\<[e,X],X\>=\<[e,Y],X\>=0$, for all $X \in \n', \, Y \in \vg$. From the second equation it follows that $[e,Y]$ is a multiple of $e$, and hence $[e,Y]=0$, for all $Y \in \vg$, as $\ad_{\n}(Y)$ is nilpotent. From the first equation we also obtain that $[e,X]$ is a multiple of $e$, as $\ad_{\n'}e$ is nilpotent and skew-symmetric and $\ip|_{\n'}$ has degeneracy $1$ and is semidefinite. Then $[e,X]=0$, for all $X \in \n'$, as $\ad_{\n}(X)$ is nilpotent. Thus $[e,\m_1]=0$, and the claim follows from Lemma~\ref{l:de}, with $\eg=\br e$.

  Next suppose that $\ip|_{\n'}$ has degeneracy $2$ and is semidefinite. Denote $\vg = (\n')^\perp, \; \og=\n' \cap \vg$ and $\s=\n' + \vg$. The restriction of $\ip$ to $\s$ has degeneracy $2$ and is semidefinite. We have $\dim \og = \codim \s = 2$ and $\og^\perp = \s$. Moreover, all four subspaces $\og, \n', \vg$ and $\s$ are $\ad_\g(\h)$-invariant, by Remark~\ref{rem:adhinv}. If $[\og, \s]=0$, we can directly apply Lemma~\ref{l:de} with $\m_1=\s$ and $\eg = \og$, and the claim follows. We therefore assume that $[\og, \s] \ne 0$. Taking $T'=e \in \og$ and $T \in \s$ in~\eqref{eq:golemma} we obtain $\<[e,T],T\>=0$. As the restriction of the inner product to $\s$ is semidefinite, of degeneracy $2$ (and $\s^\perp = \og$), and $\ad_{\s}e$ is both skew-symmetric and nilpotent, we obtain $[e,T] \subset \og$, for all $e \in \og$ and $T \in \s$, and hence $[\s,\og] \subset \og$. We obtain a nilpotent representation of the (nilpotent) algebra $\s$ on the $2$-dimensional space $\og$. By Engel's Theorem, we can find a basis $\{e_1,e_2\}$ for $\og$ such that $[\s, e_2]=0$ and $[T,e_1]=\la(T)e_2$, for all $T \in \s$, where $\la \in \s^*$. As we have assumed that $[\og, \s] \ne 0$, the $1$-form $\la$ is nonzero (but note that $\lambda(\og)=0$). Then $[\og, \s] = \br e_2$, and so by Remark~\ref{rem:adhinv}, the subspace $\br e_2$ is $\ad_\g(\h)$-invariant. Choose two vectors $f_1, f_2 \in \n$ such that $\Span(f_1,f_2) \oplus \s = \n$, and $\<f_i,f_j\>=0,\; \<f_i,e_j\>=\K_{ij}$, for $i,j=1,2$. We claim that the assumptions of Lemma~\ref{l:de} are satisfied with $\eg= \br e_2$ and $\m_1 = \br f_1 \oplus \s$. Indeed, assumptions~\eqref{it:deincl} and \eqref{it:dedim} are obviously true, and for assumption~\eqref{it:dem1z0adh} we note that $\m_1=(\br e_2)^\perp$ by construction, and hence $\m_1$ is $\ad_\g(\h)$-invariant by Remark~\ref{rem:adhinv}, as $\br e_2$ is. It remains to show that $[\m_1,e_2]=0$. As we already know that $[\s, e_2]=0$, it suffices to show that $[f_1,e_2]=0$. Taking $T'=e_2$ and $T =\xi f_1+X\in \m_1$, where $X \in \s, \xi \in \br$, in~\eqref{eq:golemma} (and using the fact that $\br e_2$ and $\m_1$ are orthogonal, $\ad_\g(\h)$-invariant subspaces) we obtain $\<[f_1,e_2],\xi f_1+X\>=0$, for all $X \in \s, \xi \in \br$. It follows that $[f_1,e_2]$ is a multiple of $e_2$, which must be zero, by nilpotency. The claim now follows from Lemma~\ref{l:de}.

  The last case to consider is the one when $\ip|_{\n'}$ has degeneracy $1$ and index $1$. This is the most involved case. As above, we denote $\vg = (\n')^\perp$ and choose a vector $e$ such that $\n' \cap \vg =\br e$. Denote $\m_1=\n' + \vg$, so that $e^\perp = \m_1$. The subspaces $\br e, \n', \vg$ and $\m_1$ are $\ad_\g(\h)$-invariant, by Remark~\ref{rem:adhinv}. We claim that the assumptions of Lemma~\ref{l:de} are satisfied with $\eg=\br e$. It is easy to see that the only fact we need to establish is that $[e, \m_1]=0$.

  The proof is completed by the following proposition.

  {
  \begin{proposition} \label{p:em10}
  In the above notation, the vector $e$ lies in the centre of $\m_1$.
\end{proposition}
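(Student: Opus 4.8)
The plan is to show $[e,\m_1]=0$ by proving separately that $[e,\vg]=0$ and $[e,\n']=0$, using the geodesic orbit equation~\eqref{eq:golemma} together with the structure of the degenerate metric on $\n'$. First I would set up coordinates: since $\ip|_{\n'}$ has degeneracy $1$ and index $1$, I choose a basis $\{e_1=e,e_2,\dots,e_{m-1},e_m\}$ for $\n'$ in which $\ip|_{\n'}$ is the matrix with $1$'s in the off-diagonal corner positions $(1,m),(m,1)$, the identity $I_{m-2}$ in the middle block, \emph{except} that the degeneracy forces a zero somewhere; more precisely I would pick $e$ spanning the radical $\n'\cap(\n')^\perp\cap\n'$ so that $e$ is null and orthogonal to all of $\n'$, with the complementary structure being Lorentz of index $1$ on a complement. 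The operator $\ad_{\n'}(e)$ is skew-symmetric with respect to $\ip|_{\n'}$ and nilpotent; the key easy observation (already used in the earlier degenerate cases above) is that a nilpotent skew-symmetric endomorphism of a semidefinite-plus-degenerate, or here index-$1$-plus-degeneracy-$1$, inner-product space on which we also know $\<[e,X],X\>=0$ for all $X$, must have image inside the radical $\br e$; combined with nilpotency of $\ad_\n$ this gives $[e,\n']=0$.

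For the $\vg$-part, I would plug $T'=e$ and $T=X+Y$ with $X\in\n'$, $Y\in\vg$ into~\eqref{eq:golemma}. Since $\br e\subset\n'$ and both $\n'$ and $\vg$ are $\ad_\g(\h)$-invariant, the $\h$-term $\<[A,e],T\>$ contributes only through the $\br e$-eigenvalue and can be absorbed or shown to vanish as in Lemma~\ref{l:ci}\eqref{it:fe}; I expect to get $\<[e,Y],X\>=\<[e,Y],Y\>=0$ for all relevant $X\in\n'$, $Y\in\vg$. Skew-symmetry gives $\<[e,Y],Y\>=0$ automatically, so the content is $\<[e,Y],\n'\>=0$, i.e.\ $[e,\vg]\subset(\n')^\perp=\vg$, and then $[e,Y]\in\vg$ while $\ad_\n(Y)$ nilpotent forces... here one needs care: $[e,\vg]\subset\vg$ alone is not enough, so I would iterate — since $[e,\n']=0$ already, $\ad(e)$ descends to an endomorphism of $\vg/(\vg\cap\n')=\vg/\br e$, still nilpotent and still skew for the induced (now nondegenerate) metric, and the $GO$ equation's quadratic consequence $\<[e,Y],Y\>=0$ plus bracket-image-in-$\br e$ considerations should pin it down to zero.

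The main obstacle I anticipate is the index-$1$, degeneracy-$1$ case being genuinely more delicate than the semidefinite cases handled just above: in the semidefinite situation a nilpotent skew endomorphism automatically has image in the radical (positive-semidefiniteness kills any would-be symplectic pair), but with a Lorentz complement a nilpotent skew map can have a nontrivial $2$-dimensional null-plane action, so proving $[e,\n']=0$ requires actually using the extra input $\<[e,X],X\>=0$ from~\eqref{eq:golemma} and not just skew-symmetry plus nilpotency. I would therefore expect the heart of the proof of Proposition~\ref{p:em10} to be a short linear-algebra lemma: if $D$ is a nilpotent endomorphism of a Lorentz-plus-one-dimensional-radical inner product space, skew-symmetric with respect to the (degenerate) form, and $\<DX,X\>=0$ identically (which is automatic from skewness, so the real hypothesis must be sharper — namely that the quadratic form it would induce on a transversal vanishes, coming from choosing $T$ non-null in~\eqref{eq:golemma} and letting $T'$ range), then $\Im D\subset\br e$. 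Establishing that lemma cleanly, and correctly identifying exactly which scalar-valued consequences of~\eqref{eq:golemma} survive once the $\h$-term is accounted for, is where the work lies; everything after that (deducing $[e,\m_1]=0$ and invoking Lemma~\ref{l:de}) is formal.
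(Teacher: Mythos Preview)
Your plan has a genuine gap in the $[e,\n']=0$ part. The ``short linear-algebra lemma'' you hope for is simply false: a nilpotent, skew-symmetric endomorphism of a degeneracy-$1$, index-$1$ space can perfectly well have image \emph{not} contained in the radical $\br e$. Concretely, relative to a basis $\{e_1,\dots,e_{m-1},e\}$ with Gram matrix
\[
\left(\begin{smallmatrix} 0 & 0 & 1 & 0\\ 0 & I_{m-3} & 0 & 0\\ 1 & 0 & 0 & 0\\ 0 & 0 & 0 & 0\end{smallmatrix}\right),
\]
the map
\[
D=\left(\begin{smallmatrix} 0 & 0 & 0 & 0\\ u & 0_{m-3} & 0 & 0\\ 0 & -u^t & 0 & 0\\ 0 & 0 & 0 & 0\end{smallmatrix}\right),\qquad u\ne 0,
\]
is nilpotent and skew for the degenerate form, and of course satisfies $\<DX,X\>=0$; yet $\Im D=\Span(u,e_{m-1})\not\subset\br e$. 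Nothing you can extract from~\eqref{eq:golemma} with $T,T'\in\m_1$ alone rules this out: the GO equation with $T'\in\n'$ and $T\in\m_1$ only tells you $\ad_{\n'}T$ is skew on $\n'$, which you already know. The matrix $D$ above is exactly the shape the paper shows $\ad_{\n'}e$ \emph{must} have once one assumes (for contradiction) that $[e,\m_1]\ne 0$.

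What the paper does, and what your plan is missing, is bring in a null vector $f\notin\m_1=e^\perp$ with $\<f,e\>=1$. The GO equation with $T'=[f,e]$ (using that $[\h,[f,e]]=0$, proved from $\ad_\g(\h)$-invariance of $\br e$) yields identities like $[[f,e],\vg]=0$, $[\vg,[f,[f,e]]]=0$, and $[f,[f,[f,e]]]=0$. Since the subspace $\Vg=\br f\oplus\vg$ generates $\n$, one then proves by induction on bracket length that $[e,[T_1,[\dots,T_r]\dots]]=0$ for all $T_i\in\Vg$, forcing $[e,\n']=0$ and contradicting $u\ne 0$. The transversal element $f$ is not optional: it is the only leverage the GO condition gives you beyond skew-symmetry on $\n'$, and your proposal never invokes it.
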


\begin{proof}
Denote $m = \dim \n'$. Seeking a contradiction we assume that $[\m_1,e] \ne 0$.

Let $\f=\{T \in \n \, : \, \<[T,X],X\>=0, \text{ for all } X \in \n'\}$. It is easy to see that $\f$ is a subalgebra of $\n$.
{
\begin{lemma} \label{l:skewonn'}
In the above notation, the following holds.
   \begin{enumerate}[label=\emph{(\alph*)},ref=\alph*]
      \item \label{it:vgskewonn'}
        $\vg \subset \f$ and $[\f,e]=0$ \emph{(}so, in particular, $[e,\vg]=0$\emph{)}.
	
	  \item \label{it:basisforn'} %Suppose $[\m_1,e] \ne 0$.
        There exists a hyperplane $\n_0 \subset \n'$, with $\n_0 \oplus \br e = \n'$, and a basis $\{e_1, \dots, e_{m-1}\}$ for $\n_0$ such that relative to the basis $\{e_1, \dots, e_{m-1},e\}$ for $\n'$, we have
\begin{equation} \label{eq:ip'ugii}
    \ip|_{\n'} = \left ( \begin{smallmatrix} 0 & 0 & 1 & 0\\ 0 & I_{m-3} & 0 & 0 \\  1 & 0 & 0 & 0 \\  0 & 0 & 0 & 0 \end{smallmatrix} \right ), \,
    \ad_{\n'}T = \left ( \begin{smallmatrix} 0 & 0 & 0 & 0 \\ VT & 0_{m-3} & 0 & 0 \\ 0 & -(VT)^t & 0 & 0 \\ a(T) & (WT)^t & 0 & 0 \end{smallmatrix}\right )
    \text{ \emph{and} }
	\ad_{\n'}e = \left ( \begin{smallmatrix} 0 & 0 & 0 & 0 \\ u & 0_{m-3} & 0 & 0 \\ 0 & -u^t & 0 & 0 \\ 0 & 0 & 0 & 0 \end{smallmatrix}\right ),
\end{equation}
        for all $T \in \f$, where $u \in L:=\Span(e_2, \dots, e_{m-2}),\, u \ne 0, \, a \in \f^*$ and $V,W: \f \to L$ are linear maps. In particular, $[\f,e_{m-1}]=0$.

      \item \label{it:Ae}
      The subspaces $\Span(u, e_{m-1})$ and $\br e_{m-1}$ are $\ad_\g(\h)$-invariant. Moreover, for any $A \in \h$, we have $[A,e]=\a(A)e, \, [A,u] = \b(A) e_{m-1}, \, [A,e_{m-1}] = \gamma(A)e_{m-1}$, for some $\a,\b,\gamma \in \h^*$.
   \end{enumerate}
\end{lemma}
\begin{proof}
  For assertion~\eqref{it:vgskewonn'}, the fact that $[\f,e]=0$ easily follows: for all $T \in \f$ and $X \in \n'$, we have $0=\<[T,X],e\>=-\<[T,e],X\>$. Therefore $[T,e]$ is a multiple of $e$, which must be zero as $\ad_{\n'}T$ is nilpotent.

  To see that $\vg \subset \f$, take $T'=Y \in \vg$ and $T=X \in \n'$ in~\eqref{eq:golemma}. As $\vg$ and $\n'$ are orthogonal, $\ad_\g(\h)$-invariant subspaces, se obtain $\<[X,Y],X\>=0$, as required.

  %  (in other words, $\n_0$ is $\ad_{\n'}e$-invariant)
  For assertion~\eqref{it:basisforn'}, we note that the subspace $[e,\n'] \subset \n'$ does not contain $\br e$ (indeed, for no $X \in \n'$ we can have $[X,e]=e$, as $\ad_{\n'}X$ is nilpotent). Choose a linear complement $\n_0$ to $\br e$ in $\n'$ in such a way that $\n_0 \supset [e,\n']$. The restriction of $\ip$ to $\n_0$ is nondegenerate and is of Lorentz signature. For $T \in \f$ we define the endomorphism $\phi_T$ of $\n_0$ by $[T,X]=\phi_T X + \mu(X)e$, for $X \in \n_0$. For every $T \in \f$, the endomorphism $\phi_T$ is skew-symmetric (as $\<e,\n'\>=0$) and nilpotent (as $[\f,e] = 0$ by assertion~\eqref{it:vgskewonn'}). Moreover, the map $\phi: \f \to \so(\n_0,\ip|_{\n_0})$ sending $T$ to $\phi_T$ is a Lie algebra homomorphism (as $[\f,e] = 0$). Considering the Iwasawa decomposition of the Lie algebra $\so(m -2, 1)=\so(\n_0,\ip|_{\n_0})$, by the argument similar to that in the proof of Proposition~\ref{p:dernondeg2} (see equations~\eqref{eq:ip'ug} and~\eqref{eq:advg}) we can construct a basis $\{e_1, \dots, e_{m-1}\}$ for $\n_0$ such that the restriction of $\ip$ to $\n'$ relative to the basis $\{e_1, \dots, e_{m-1},e\}$ for $\n_0$ has the form as given in~\eqref{eq:ip'ugii}, and moreover, there is a linear map $V: \f \to L \,(=\Span(e_2, \dots, e_{m-2})$ such that for all $T \in \f$ we have $\phi_T e_1 = VT, \; \phi_T e_{m-1}=0$ and $\phi_T e_i = -\<VT,e_i\>e_{m-1}$, for $i=2, \dots, m-2$.

  It follows that for some linear map $W: \f \to L$ and linear forms $a, b \in \f^*$, we have
  \begin{equation}\label{eq:adfonn'}
  \begin{gathered}
  [T, e_1] = VT + a(T)e, \qquad [T, e_{m-1}]=b(T)e, \\ [T, e_i] = -\<VT,e_i\>e_{m-1} + \<WT,e_i\>e, \quad i=2, \dots, e_{m-2},
  \end{gathered}
  \end{equation}
  for all $T \in \f$. In particular, taking $T=e$ and using the fact that $[e,\n_0] \subset \n_0$ (by construction of $\n_0$) we obtain $We=0$ and $a(e)=b(e)=0$. Thus $\ad_{\n'}e$ has the form as given in~\eqref{eq:ip'ugii}, where we denote $u=Ve \in L$. Then from~\eqref{eq:adfonn'} we obtain $[e,u]=-\|u\|^2e_{m-1}$, and so $[T,[e,u]]=-\|u\|^2 b(T) e$, for all $T \in \f$. But $[T,e]=0$ by assertion~\eqref{it:vgskewonn'}, which gives $[[T,u],e]= \|u\|^2 b(T) e$. As $\ad_{\n'}[T,u]$ is nilpotent we get $\|u\|^2 b(T) = 0$, for all $T \in \f$. If $b \ne 0$ we get $u=0$ (as the restriction of $\ip$ to $L$ is definite), and so $[e,\n']=0$. As $[e,\vg]=0$ by assertion~\eqref{it:vgskewonn'} we obtain $[e,\m_1]=0$ contradicting our assumption. Therefore $b=0$, and then equations~\eqref{eq:adfonn'} imply that $\ad_{\n'}T$ has the form given in~\eqref{eq:ip'ugii}.

  The last statement in assertion~\eqref{it:basisforn'} follows from~\eqref{eq:ip'ugii}.

  For assertion~\eqref{it:Ae}, we note that from~\eqref{eq:ip'ugii} we obtain $[e,\n']=\Span(u, e_{m-1})$ and $[e,[e,\n']]=\br e_{m-1}$, and so the first claim follows from Remark~\ref{rem:adhinv}. Then the second claim also follows (note that the $u$-component of $[A,u]$ vanishes as $\<[A,u],u\>=0$ and $u$ lies in the subspace $L$ with a definite inner product).
\end{proof}
}
Let now $f \notin \m_1$ be a null vector such that $f \perp \n_0$ and $\<f,e\>=1$ (the choice of such an $f$ is not unique); note that $\m_1 \oplus \br f = \n$.

{
\begin{lemma} \label{l:fe}
    The following holds:  
    \begin{align}\label{eq:hfe}
      & [\h,[f,e]]=0, \\
      & [e,[f,\vg]]=[[f,e],\vg]=0, \label{eq:fev} \\
      & [\vg, [f,[f,e]]] = 0, \label{eq:ffev} \\
      & [f,[f,[f,e]]] = 0. \label{eq:fffe}
    \end{align}
\end{lemma}
\begin{proof}
  From Lemma~\ref{l:skewonn'}\eqref{it:Ae}, for any $A \in \h$, we have $\<[A,f],e\> = -\<[A,e],f\>= -\a(A)$, $\<[A,f],e_{m-1}\> = -\<[A,e_{m-1}],f\>= 0$ and $\<[A,f],u\> = -\<[A,u],f\>= 0$, as $e_{m-1} \in \n_0 \subset f^\perp$. It follows that $[A,f]=-\a(A)f + Y + X$, where $Y \in \vg$ and $X \in (\Span(e_{m-1},u))^\perp \cap \n'$. Then $[A,[f,e]]=[[A,f],e]+[f,[A,e]]=[-\a(A)f + Y + X,e]+[f, \a(A)e]=0$, as $[Y,e]=0$ by Lemma~\ref{l:skewonn'}\eqref{it:vgskewonn'} and $[X,e]=0$ by~\eqref{eq:ip'ugii}. This proves~\eqref{eq:hfe}.

  Take in~\eqref{eq:golemma} $T'=[f,e]$ and a non-null vector $T=\mu f + Y + X$, where $Y \in \vg, \, X \in \n'$ and $\mu \in \br$. As $[A,[f,e]]=0$ by~\eqref{eq:hfe} and $\vg \perp \n'$ we obtain $\<[\mu f + Y + X, [f,e]], \mu f + X\>=0$, for all $Y \in \vg, \, X \in \n'$ and $\mu \in \br$, by continuity, from which we get
  \begin{equation} \label{eq:yfex}
  \begin{gathered}
    \<[Y, [f,e]], X\>=0, \quad \<[X, [f,e]], X\>=0, \quad \<[f, [f,e]], f\>=0, \\ \<[f, [f,e]], X\>+\<[X, [f,e]], f\>=0,
  \end{gathered}
  \end{equation}
  for all $Y \in \vg, \, X \in \n'$.

  The first equation of~\eqref{eq:yfex} implies that $[Y, [f,e]]$ is a multiple of $e$. But $[Y, [f,e]]=[[Y, f],e]$, as $[Y,e]=0$ by Lemma~\ref{l:skewonn'}\eqref{it:vgskewonn'}, and so $[Y, [f,e]]=[[Y, f],e]=0$, as $\ad_{\n'}[Y,f]$ is nilpotent. This proves~\eqref{eq:fev}.

  We now consider the last equation of~\eqref{eq:yfex}. We have $\<[X, [f,e]], f\>=\<-[e, [X,f]] - [f,[e,X]], f\>$. As $[e,\n'] \subset \n_0$ (by construction of $\n_0$) and $f \perp \n_0$, we have $\<[e, [X,f]], f\>=0$. Taking $X=x_1e_1+\tilde{x}+x_{m-1}e_{m-1}+xe \in \n$, where $x_1, x_{m-1},x \in \br$ and $\tilde{x} \in L\,(=\Span(e_2,\dots,e_{m-2}))$ we obtain from~\eqref{eq:ip'ugii} that $[e,X] = x_1 u +\<\tilde{x},u\>e_{m-1}$, which gives $\<[X, [f,e]], f\>= -\< [f,[e,X]], f\> = -\<[f,x_1 u +\<\tilde{x},u\>e_{m-1}],f\>= -x_1 \<[f,u],f\> - \<\tilde{x},u\> \<[f,e_{m-1}],f\>$. From the last equation of~\eqref{eq:yfex} we obtain $[f,[f,e]]=\<[f,u],f\>e_{m-1} + \<[f,e_{m-1}],f\>u + \eta e$, for some $\eta \in \br$. But then $\eta = 0$ from the third equation of~\eqref{eq:yfex}. Moreover, as $e \in \vg$, we get $[e,[f,e]]=0$ by~\eqref{eq:fev}, which implies $[e,[f,[f,e]]]=0$. Substituting the above expression for $[f,[f,e]]$ and using~\eqref{eq:ip'ugii} we find
  \begin{equation}\label{eq:fem-1f}
  \<[f,e_{m-1}],f\>=0, \text{ and so } [f,[f,e]]=\<[f,u],f\>e_{m-1}.
  \end{equation}
  But from Lemma~\ref{l:skewonn'}\eqref{it:vgskewonn'},~\eqref{it:basisforn'} we have $\vg \subset \f$ and $[\f,e_{m-1}]=0$ which implies $[[f,[f,e]],\vg]=0$, by the second equation of~\eqref{eq:fem-1f} . This establishes~\eqref{eq:ffev}.

  From the second equation of~\eqref{eq:yfex} we obtain that $[f,e] \in \f$, and so $\ad_{\n'}[f,e]$ has the form given in~\eqref{eq:ip'ugii}, in particular, $[[f,e],u] \in \Span(e_{m-1},e)$. As $[e,[f,u]] \in [e,\n'] = \Span(u, e_{m-1})$ (by~\eqref{eq:ip'ugii}), we obtain $[f,[e,u]] = [[f,e],u] + [e,[f,u]] \in \Span(u,e_{m-1},e)$. But $[e,u]=-\|u\|^2e_{m-1}$ by~\eqref{eq:ip'ugii}, and so we obtain $[f,e_{m-1}]= \rho_1 u +\rho_2 e_{m-1} + \rho_3 e$, for some $\rho_1, \rho_2, \rho_3 \in \br$. Then from the first equation of~\eqref{eq:fem-1f} we get $\rho_3=0$. Moreover, as $[f,e] \in \f$, from~\eqref{eq:ip'ugii} we find $[[f,e],e_{m-1}]=0$ which implies $[e,[f,e_{m-1}]]=0$ (since $[e,e_{m-1}]=0$ by~\eqref{eq:ip'ugii}). From the expression for $[f,e_{m-1}]$ above we obtain $[e,\rho_1 u +\rho_2 e_{m-1}]=0$ which implies $\rho_1=0$, again by~\eqref{eq:ip'ugii}. Therefore $[f,e_{m-1}]= \rho_2 e_{m-1}$ which gives $[f,e_{m-1}]=0$, by nilpotency. Now equation~\eqref{eq:fffe} follows from the second equation of~\eqref{eq:fem-1f}.
\end{proof}
}

We can now complete the proof of the proposition. We have $\n = \br f \oplus \m_1 = \br f \oplus (\vg + \n') = (\br f \oplus \vg) + \n'$. It follows that the subspace $\Vg=\br f \oplus \vg$ contains some linear complement to $\n'$ in $\n$, and hence generates $\n$. Then $\n = \Vg + [\Vg,\Vg]+[\Vg,[\Vg,\Vg]]+ \dots$, and so $\n'=[\Vg,\Vg]+[\Vg,[\Vg,\Vg]]+ \dots$. As we already know that $[e,\vg]=0$ (by Lemma~\ref{l:skewonn'}\eqref{it:vgskewonn'}), to show that $[e,\m_1]=0$ it suffices to prove that $[e,\n']=0$, that is, to prove that $[e,[T_1,[T_2,[\dots,[T_{r-1},T_r]\dots]]]]=0$, where $r \ge 2$, and where, for every $i=1, \dots, r$, we have either $T_i=f$ or $T_i \in \vg$. The proof goes by induction by $r \ge 2$. If $r=2$ the claim follows from the facts that $[e,\vg]=0$ and that $[e,[f,\vg]]=0$ (by~\eqref{eq:fev}). Suppose $r > 2$. If $T_1 \in \vg$, then the claim follows by the induction assumption from the fact that $[e,T_1]=0$. Suppose $T_1=f$. Then by the induction assumption it suffices to prove that $[[e,f],[T_2,[T_3,[\dots,[T_{r-1},T_r]\dots]]]]=0$. If $T_2 \in \vg$, the claim follows from the fact that $[e,[f,\vg]]=0$ (by~\eqref{eq:fev}) and the induction assumption (or from~\eqref{eq:ffev} if $r=3$). Suppose $T_2=f$. Then it suffices to prove that $[[[e,f],f],[T_3,[\dots,[T_{r-1},T_r]\dots]]=0$. But $[[[e,f],f],f]=0$ by~\eqref{eq:fffe} and $[[[e,f],f],\vg]=0$ by~\eqref{eq:ffev}. It follows that $[[[e,f],f],T_i]=0$, for all $i=3, \dots, r$, which completes the proof of Proposition~\ref{p:em10}.
\end{proof}
  }
With Proposition~\ref{p:em10}, application of Lemma~\ref{l:de} completes the proof of Theorem \ref{th:deg}.

\end{document}